\documentclass[a4paper, 12pt]{amsart}

%Force arxiv to use pdflatex
\pdfoutput=1

\usepackage{amsmath,amssymb,amsthm,url}
\usepackage[utf8]{inputenc}
\usepackage[T1]{fontenc}
\usepackage{libertine}
\usepackage[libertine,cmintegrals,cmbraces]{newtxmath}
\usepackage[shortlabels]{enumitem}
\usepackage{mathtools}
\usepackage{xcolor}
\urlstyle{sf}
\usepackage{tikz}
\usepackage{tikz-cd}
\usepackage{nicefrac}
\usepackage{dsfont}
\usepackage{todonotes}
\usepackage{a4wide}
\usepackage{xcolor}

%Don't display MR-number in bibliography 
\AtBeginDocument{%
   \def\MR#1{}
} 
\usepackage{euscript}
\usepackage[all]{xy}

\usepackage[pagebackref]{hyperref}
  
\hypersetup{%
  bookmarksnumbered=true,%%
  colorlinks=true,%
  linkcolor=blue,%
  citecolor=blue,%
  filecolor=blue,%
  menucolor=blue,%
%   pagecolor=blue,%
  urlcolor=blue,%
  pdfnewwindow=true,%
  pdfstartview=FitBH}

%% N.T. notes

%% G.B. notes
\definecolor{indiguy}{RGB}{195,179,227}

\setlength{\parskip}{1ex}
\setlength{\parindent}{0pt}

\usepackage{xcolor}
\definecolor{seagreen}{RGB}{46,139,87}
\definecolor{maroon}{RGB}{128,0,0}
\definecolor{darkviolet}{RGB}{148,0,211}
\definecolor{twelve}{RGB}{100,100,170}
\definecolor{thirteen}{RGB}{100,150,50}
\definecolor{fourteen}{RGB}{200,0,0}
\definecolor{fifteen}{RGB}{0,200,0}
\definecolor{sixteen}{RGB}{0,0,200}
\definecolor{seventeen}{RGB}{200,0,200}
\definecolor{eighteen}{RGB}{0,200,200}

%%% Maths Fonts %%%

\newcommand{\bb}[1]{\mathbb{#1}}

\newcommand{\es}[1]{\EuScript{#1}}
\renewcommand{\sf}[1]{\mathsf{#1}}

%%% Limits and Colimits &&&
\DeclareMathOperator{\colim}{\mathrm{colim}}

%%% Big Quotient %%%

%%% Categories %%%

\newcommand{\s}{{\sf{Sp}}}
\DeclareMathOperator{\T}{\es{S}_\ast}

\newcommand{\exc}[1]{\mathsf{Exc}^{\leq #1}}

%%% Morphisms %%%

\newcommand{\Fun}{\sf{Fun}}

\DeclareMathOperator{\Map}{\mathsf{Map}}

%%% Frequently used Functors %%%

\DeclareMathOperator{\id}{\mathrm{Id}}

\newcommand{\op}{\mathrm{op}}

%% Adjunctions %%%
  \newcommand{\adjunction}[4]{
\xymatrix{
#1:#2 \ar@<.5ex>[r] &
\ar@<.5ex>[l] #3:#4
}}

%%%%%%% Theorems %%%%%%%%%
\newtheorem{thm}{Theorem}[subsection]
\newtheorem{prop}[thm]{Proposition}
\newtheorem{lem}[thm]{Lemma}
\newtheorem{cor}[thm]{Corollary}

\newtheorem*{thm*}{Theorem}

\theoremstyle{definition}
\newtheorem{definition}[thm]{Definition}

\newtheorem{rem}[thm]{Remark}

\newtheorem{xxthm}{Theorem}

\newtheorem{ques}{Question}

%%%%%%%%%%%%%%%%%%%%%%%%%%%%%%%%%%%%%%%%

\begin{document}

\title{The Goodwillie calculus of polyhedral products}
\author{Guy Boyde}
\address[Boyde]{Mathematical Institute, Utrecht University, Budapestlaan 6, 3584 CD Utrecht, The Netherlands}
\email{g.boyde@uu.nl}
\author{Niall Taggart}
\address[Taggart]{Mathematical Institute, Utrecht University, Budapestlaan 6, 3584 CD Utrecht, The Netherlands}
\email{n.c.taggart@uu.nl}

\begin{abstract}
We describe the Goodwillie calculus of polyhedral products in the case that the fat wedge filtration on the associated real moment-angle complex is trivial. We do this by analysing the behaviour on calculus of the Denham-Suciu fibre sequence, the Iriye-Kishimoto decomposition of the polyhedral product constructed from a collection of pairs of cones and their bases, and the Hilton-Milnor decomposition. As a corollary we show that the Goodwillie calculus of these polyhedral products converges integrally and diverges in $v_h$-periodic homotopy unless the simplicial complex is a full simplex.
\end{abstract}

\maketitle

\setcounter{tocdepth}{1}
{\hypersetup{linkcolor=black} \tableofcontents}

\section{Introduction}

Let $K$ be a finite simplicial complex and $(\underline{X}, \underline{A})=\{(X_i, A_i)\}$ be a collection of pairs of spaces indexed by the vertices of $K$. The \emph{polyhedral product} $(\underline{X}, \underline{A})^K$ is a subspace of the cartesian product $\prod X_i$ constructed from the combinatorial information of $K$. Of particular interest is the case where each subspace $A_i$ is a point, as then the polyhedral products interpolate between the wedge $\bigvee X_i$ (when $K$ consists of discrete points) and the product $\prod X_i$ (when $K$ is a full simplex). These spaces are a natural testing ground for homotopy theory and have concrete relationships with a surprisingly varied collection of mathematical disciplines, for a survey see e.g.,~\cite{BBCHandbook}. 

Goodwillie calculus is a tool for analysing functors that arise in topology and is in many ways a categorification of classical differential calculus. It approximates a homotopical functor $F$ by a tower of simpler functors
\[
\cdots \longrightarrow P_nF \longrightarrow P_{n-1}F \longrightarrow P_{1}F \longrightarrow P_0 F,
\]
in which $P_nF$ is the \emph{universal $n$-excisive functor under $F$}, analogous to the $n$-th order Taylor approximation to a function. We say that the tower \emph{converges} if $F$ is the (homotopy) limit of its approximations, and think of this as `having a convergent Taylor series'.

The Goodwillie tower can be regarded as interpolating between the stable and unstable worlds by viewing the former as a `linearization' of the latter. For example, in the case that $F$ is the identity functor $\id: \T \to \T$ on pointed spaces, $P_1(\id)$ is the stable homotopy functor $\Omega^\infty \Sigma^{\infty}$, and more generally, linear endofunctors of pointed spaces are essentially homology theories. Applications of this philosophy have been successful in several settings, for a survey see e.g.,~\cite{AroneChingHandbook}. The Goodwillie tower of the identity functor on pointed spaces has been extensively studied, see e.g.,~\cite{Johnson, AroneMahowald, AroneDwyer}, and is well understood. It is often the case that practitioners of Goodwillie calculus treat the Goodwillie tower of the identity as a \emph{primitive object}, providing all descriptions of the Goodwillie tower of more complex homotopical objects in terms of the tower of the identity.

Brantner and Heuts~\cite{BrantnerHeuts} study the behaviour of Goodwillie calculus on wedges, building on work of Arone and Kankaanrinta~\cite{AroneKankaanrinta}. Since the wedge functor $\bigvee: \es{S}_\ast^{\times m} \to \T$ is colimit preserving, there is a canonical equivalence 
\begin{align}
P_n(\bigvee)\simeq P_n(\id) \circ \bigvee,
\end{align} 
which allowed Brantner and Heuts~\cite[Theorem 1.2]{BrantnerHeuts} to provide a Hilton-Milnor decomposition for the Goodwillie tower of wedges.

\begin{thm*}[Brantner-Heuts]
For pointed connected spaces $X_1, \dots, X_m$, there is an equivalence  
\[
\Omega P_n(\id)(\Sigma X_1 \vee \cdots \vee \Sigma X_m) \simeq \sideset{}{'}\prod_{\omega \in \bb{L}_m} \Omega P_{\lfloor \frac{n}{|\omega|} \rfloor}(\id)(\Sigma\omega(X_1, \dots, X_m)), 
\]
where $\bb{L}_m$ is a basis for the free Lie algebra on $m$ generators.
\end{thm*}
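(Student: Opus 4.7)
The strategy is to transport the classical Hilton-Milnor decomposition through the Taylor tower of the identity, using the weight filtration on the free Lie algebra as the bridge between them. First I would recall the classical Hilton-Milnor equivalence
\[
\Omega\Sigma(X_1 \vee \cdots \vee X_m) \simeq \sideset{}{'}\prod_{\omega \in \bb{L}_m} \Omega\Sigma\omega(X_1, \ldots, X_m),
\]
assembled from James-Hopf maps $j_\omega$ indexed by a Hall basis, and observe that a word $\omega$ of weight $|\omega|=k$ encodes a $k$-fold iterated smash product of the $X_i$.

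Applying the unit $\id \to P_n(\id)$ of the Taylor tower to $\Sigma(X_1 \vee \cdots \vee X_m)$ and factoring each composite $\Omega\Sigma\omega(X) \to \Omega P_n(\id)(\Sigma\bigvee X_i)$ through the appropriately truncated approximation at $\Sigma\omega(X)$ yields a comparison map
\[
\sideset{}{'}\prod_{\omega \in \bb{L}_m} \Omega P_{\lfloor n/|\omega|\rfloor}(\id)(\Sigma\omega(X_1, \ldots, X_m)) \longrightarrow \Omega P_n(\id)(\Sigma(X_1 \vee \cdots \vee X_m)),
\]
which one must show is an equivalence. To motivate the cutoff $\lfloor n/|\omega|\rfloor$, view $P_n(\id) \circ \bigvee$ as a functor of $(X_1, \ldots, X_m)$: since $\bigvee$ is colimit-preserving, this composite is $n$-excisive in the total multi-degree, and its multi-homogeneous layers of total degree $k$ correspond, under the operadic identification of the Goodwillie layers of $\id$ with the spectral Lie operad, to Hall basis elements of weight $k$. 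A word $\omega$ of weight $k$ iterated $j$ times contributes a term of total multi-degree $jk$, which survives in the $n$-excisive approximation precisely when $jk \leq n$, i.e.\ when $j \leq \lfloor n/k\rfloor$.

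The main obstacle is showing that the comparison map is genuinely an equivalence, with no information lost or double-counted under reassembly through the Taylor tower. I would proceed layer-by-layer: the induced map on each layer $D_\ell(\id)$ reduces, via the Arone-Ching operadic model, to the classical combinatorial fact that a Hall basis for the free Lie algebra on $m$ generators provides a basis for each weight piece of the free Lie cooperad. Induction on $n$ using the fibre sequences $D_n(\id) \to P_n(\id) \to P_{n-1}(\id)$, together with convergence of the weak product on the right-hand side (since each factor indexed by a sufficiently heavy $\omega$ becomes highly connected and hence unaffected by the truncation), then completes the argument.
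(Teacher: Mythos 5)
Your global strategy---push the natural Hilton--Milnor equivalence through the Taylor tower and read off the cutoff $\lfloor n/|\omega|\rfloor$ from the weight of the word---is the right one; this statement is quoted from Brantner--Heuts rather than proved here, but the paper's own analogues (e.g.\ Theorem~\ref{thm: multivariable calc of fibre}) are proved exactly along these lines. Two of your steps, however, do not go through as written. First, the factorization you use to build the comparison map is not available in the form stated: a map $\Omega\Sigma\omega(X) \to \Omega P_n(\id)(\Sigma X_1 \vee \cdots \vee \Sigma X_m)$ does not factor through $\Omega P_{\lfloor n/|\omega|\rfloor}(\id)(\Sigma\omega(X))$ by any universal property of the target, because the target is not a functor of the single space $\omega(X)$. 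The factorization only makes sense after regarding both sides as functors of the tuple $\underline{X}=(X_1,\dots,X_m)$, observing that $\Omega P_n(\id)(\Sigma\bigvee -)$ is $n$-excisive in $\underline{X}$ (since $\Sigma\circ\bigvee$ preserves strongly cocartesian cubes), and invoking the identification $P_n\bigl(\Omega\Sigma\omega(-)\bigr) \simeq \Omega P_{\lfloor n/|\omega|\rfloor}(\id)\bigl(\Sigma\omega(-)\bigr)$ for the $|\omega|$-fold smash-power functor $\omega(-)$. That identification is the Arone--Kankaanrinta smash-power lemma (\cite[Lemma 2.4]{BrantnerHeuts}); it is the one genuinely computational input, and in your write-up it appears only as ``motivation'' for the cutoff rather than as a proved step.

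Second, the layer-by-layer verification via Arone--Ching is where the real gap sits: the claim that the induced map on each $D_\ell(\id)$ ``reduces to the classical combinatorial fact'' about Hall bases is precisely a stable Hilton--Milnor/PBW-type splitting of the layers of the identity on a wedge---that is the entire content of the statement at that layer, and nothing in your sketch supplies it. The efficient argument avoids constructing a comparison map altogether: apply $P_n$ (of functors of $\underline{X}$) to the natural equivalence between $\Omega\Sigma\bigvee(-)$ and the weak product of the $\Omega\Sigma\omega(-)$; commute $P_n$ past $\Omega$ and past the weak product using left exactness and commutation with filtered colimits; identify $P_n$ of each factor by the smash-power lemma; and identify $P_n$ of the other side as $\Omega P_n(\id)\circ\Sigma\bigvee$ because $\Sigma\bigvee$ preserves colimits. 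Finally, your reason for discarding heavy words is slightly off: factors with $|\omega|>n$ vanish because $P_0(\id)\simeq\ast$, i.e.\ the truncation kills them outright, not because they are ``highly connected and hence unaffected by the truncation.''
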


The wedge is an instance of a polyhedral product on a simplicial complex consisting of discrete points, i.e., Brantner and Heuts investigated the Goodwillie calculus of the functor
\[
((-), \underline{\ast})^{[m]} = \bigvee : \es{S}_\ast^{\times m} \longrightarrow \T,
\]
and so one can ask if similar decompositions exist for the Goodwillie towers of polyhedral products, 
\[
((-), \underline{\ast})^{K} : \es{S}_\ast^{\times m} \longrightarrow \T,
\]
on arbitrary simplicial complexes. At the other extreme, when $K = \Delta^{m-1}$, the polyhedral product becomes the cartesian product
\[
((-), \underline{\ast})^{\Delta^{m-1}} = \prod : \es{S}_\ast^{\times m} \longrightarrow \T,
\]
and in this case, the calculus is also well-understood since for formal reasons we have
\begin{align}
P_n(\prod) \simeq \prod P_n(\id) \circ \sf{pr}_i,
\end{align}
where $\sf{pr}_i : \es{S}_\ast^{\times m} \to \T$ is the projection onto the $i$th factor (for details see Lemma~\ref{lem: multi-variable approx to product}). The reader may like to compare (1) and (2) and note that for both the wedge and the product the calculus is reducible to that of the identity for formal reasons, but that the formulas look quite different.

In this article, we fill in some of the blanks between the wedge sum and cartesian product by providing a decomposition of the Goodwillie tower of the polyhedral product $((-), \underline{\ast})^{K}$ in those cases where a natural integral decomposition of a certain homotopy fibre is known (c.f Lemma \ref{lem: Denham Suciu sequence}). Such decompositions are known in a wide variety of cases, see e.g., \cite{BBCGFunctor, GrbicTheriault, IriyeKishimoto, BBCHandbook}, and we work in the generality of Iriye and Kishimoto~\cite{IriyeKishimoto2} who require that the so-called fat wedge filtration is trivial, see \S~\ref{sec: integral decomps}. This hypothesis holds for many families of simplicial complexes including, $k$-skeleta of standard simplices, shifted complexes,  Alexander duals of shellable complexes, and $1$-skeletons of flag complexes that are chordal graphs.

\subsection*{Product decompositions} Let $K$ be a fixed simplicial complex on vertex set $[m]$. There are two different approaches one could take to studying the Goodwillie calculus of polyhedral products. Firstly, one can treat the functor 
\[
((-), \underline{\ast})^{K}: \es{S}_\ast^{\times m} \longrightarrow \T, \  \underline{X} \longmapsto (\underline{X},\underline{\ast})^{K},
\]
as a \emph{multi-variable} functor, and applying multi-variable calculus, i.e., calculus in each variable separately. In this case, we get $\Vec{n}$-excisive approximations for any multi-index $\Vec{n} = (n_1, \dots, n_m)$, and our product decomposition takes the following form.

\begin{xxthm}\label{Thm: multivariable decomp}
If the fat wedge filtration on the real moment-angle complex of $K$ is trivial, then for pointed connected spaces $X_1, \dots, X_m$ there is an equivalence, 
\[
\Omega P_{\Vec{n}}((\Sigma(-), \underline{\ast})^K)(\underline{X}) \simeq \Omega \sideset{}{'}\prod_{\omega \in \bb{L}(\mathscr{I}_K)} P_\kappa(\id)(\Sigma \omega \alpha (\underline{X}))\times \Omega \prod_{i=1}^m P_{n_i}(\id)(\Sigma X_i),
\]
where 
\begin{itemize}
    \item we denote by $\bb{L}(\mathscr{I}_K)$ the ordered set of Lie words $\omega$ forming a basis for the free algebra on formal symbols $\{y_{(I,\Vec{k})}\}_{(I, \vec{k}) \in \mathscr{I}_K}$. We evaluate such a word on the collection of functors 
\[
\alpha = \{\alpha_{(I,\vec{k})} : \es{S}_\ast^{\times m} \longrightarrow \es{S}_\ast \}_{(I, \vec{k}) \in \mathscr{I}_K},
\]
given by
\[
\alpha_{(I,\vec{k})}( \underline{X}) = |K_I| \wedge \bigwedge_{i \in I} X_{i}^{ \wedge k_i},
\]
by letting the bracket act as a point-wise smash product of functors, and,
\item we denote by $\kappa = \min_i(\lfloor \frac{n_i}{a_i} \rfloor)$ where $a_i$ the smash power of $X_i$ appearing in the `word' $\omega \alpha$ when applied to $\underline{X} = (X_1, \dots, X_m)$.
\end{itemize}
\end{xxthm}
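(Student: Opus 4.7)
The plan is to combine three ingredients: the Denham--Suciu fibre sequence, the Iriye--Kishimoto wedge decomposition available under our hypothesis on $K$, and a multi-variable analog of the Brantner--Heuts Hilton--Milnor theorem for the Goodwillie tower. The key preliminary observation is that $\Omega$ commutes with $P_{\vec{n}}$, since $P_{\vec{n}}$ is constructed from iterated homotopy limits which $\Omega$ preserves, so it suffices to analyse $P_{\vec{n}}$ applied to the functor $\underline{X} \mapsto \Omega(\Sigma\underline{X}, \underline{\ast})^K$.

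First I would apply the Denham--Suciu fibre sequence (Lemma~\ref{lem: Denham Suciu sequence}) to $(\Sigma\underline{X}, \underline{\ast})^K$ and loop it. The natural inclusions $\Omega\Sigma X_i \to \Omega(\Sigma\underline{X}, \underline{\ast})^K$ assemble, via the $H$-space multiplication on the target, to a section of the projection to $\prod_i \Omega \Sigma X_i$, producing a splitting
\[ \Omega(\Sigma\underline{X}, \underline{\ast})^K \simeq \Omega(\underline{C\Omega\Sigma X}, \underline{\Omega\Sigma X})^K \times \prod_i \Omega \Sigma X_i. \]
Applying $P_{\vec{n}}$, which preserves products, the second factor becomes $\prod_i \Omega P_{n_i}(\id)(\Sigma X_i)$, since each factor depends on only one variable; this matches the second product in the statement.

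For the remaining factor, under the trivial fat wedge filtration hypothesis the Iriye--Kishimoto decomposition rewrites the fibre $(\underline{C\Omega\Sigma X}, \underline{\Omega\Sigma X})^K$ as a wedge of suspensions indexed by $\mathscr{I}_K$. Looping this wedge and applying Hilton--Milnor expresses the result as a weak product over Lie words $\omega$ with letters $y_{(I,\vec{k})}$, where the bracket acts as a smash product of the corresponding wedge summands. A multi-variable analog of the Brantner--Heuts theorem, applied with prescribed multi-index $\vec{n}$, then identifies $P_{\vec{n}}$ of this weak product as the claimed weak product of $P_\kappa(\id)$-values on $\Sigma\omega\alpha(\underline{X})$. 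The truncation $\kappa = \min_i \lfloor n_i/a_i \rfloor$ is the natural multi-variable analog of the single-variable $\lfloor n/|\omega|\rfloor$, reflecting that the effective Goodwillie degree of $\omega\alpha(\underline{X})$ in each variable $X_i$ is its smash power $a_i$.

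The main obstacle is the multi-variable Brantner--Heuts statement itself, with the correct truncation parameter, since only the single-variable version was cited. A related technical subtlety is that the Iriye--Kishimoto wedge summands produced from $(\underline{C\Omega\Sigma X}, \underline{\Omega\Sigma X})^K$ involve factors $(\Omega\Sigma X_i)^{\wedge k_i}$, rather than the simpler $X_i^{\wedge k_i}$ appearing in the statement's $\alpha_{(I,\vec{k})}$. Reconciling the two requires the James splitting $\Sigma\Omega\Sigma X \simeq \bigvee_{j \geq 1} \Sigma X^{\wedge j}$ applied inside the Goodwillie tower, together with careful bookkeeping to show that the additional wedge summands so produced are absorbed into the Lie-word indexing by $\mathscr{I}_K$ with the stated truncation $\kappa$.
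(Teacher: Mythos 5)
Your proposal is correct and follows essentially the same route as the paper: split off the product factor via the looped, split Denham--Suciu fibration, decompose the remaining fibre integrally via Iriye--Kishimoto together with the James splitting and a countable Hilton--Milnor theorem, and then apply $P_{\vec{n}}$ to the resulting weak product term-by-term. The two obstacles you flag are both handled by existing inputs: the ``multi-variable Brantner--Heuts'' step is exactly \cite[Lemma 2.4]{BrantnerHeuts} (due to Arone--Kankaanrinta), which computes $P_{\vec{n}}$ of a suspended smash power and yields the truncation $\kappa=\min_i\lfloor n_i/a_i\rfloor$, and the James splitting is applied naturally at the integral level \emph{before} invoking calculus, so no bookkeeping inside the tower is required.
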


Corollary \ref{cor: weak product is product after Pn} gives that for all but finitely many factors, $\kappa$ is zero and hence the weak product in Theorem~\ref{Thm: multivariable decomp} is really a finite cartesian product, in contrast with the integral situation. It is possible in principle, though undesirable in practice, to list the finitely many factors for which $\kappa$ is nonzero, together with the corresponding smash powers of the $X_i$, and arrive at a loop decomposition of the calculus of $P_{\Vec{n}}((\Sigma(-), \underline{\ast})^K)$ as a finite product of relatively brutal single variable approximations to the identity, evaluated on spaces obtained by smashing together the input spaces $X_i$ with geometric realizations of subcomplexes of $K$ coming from Iriye and Kishimoto's decomposition.

The second approach is to apply single-variable calculus to the functor $((-), \underline{\ast})^K$, that is, to treat $\underline{X}$ as a \emph{single-variable}. In this case, we obtain an analogous decomposition (Theorem \ref{thm: single-variable calc of fibre, restricted indexing}), but only after composing with the diagonal, i.e., taking all of the spaces to be the same. For many of the applications of polyhedral products to areas like, right–angled Artin groups, right–angled Coxeter, and graph products this suffices, see e.g.,~\cite{BBCHandbook}.

The decompositions we obtain here are quite different from the decompositions one would obtain from evaluating the Goodwillie calculus of the identity functor on the polyhedral product. For instance, in the case of the polyhedral product $(C\underline{X}, \underline{X})^K$, Theorem~\ref{thm: calc decomposition cone on X} provides a decomposition of the calculus of the functor $(C(-), (-))^K$ under the hypothesis that the fat wedge filtration on the real moment-angle complex is trivial. This decomposition is vasty different from the decomposition of the Goodwillie calculus of the identity functor evaluated on $(C\underline{X}, \underline{X})^K$, see Proposition~\ref{prop: BH for cone on X} and Remark~\ref{rem: compare with id}.

\subsection*{Convergence}
It is well-known that the Goodwillie tower of the identity converges on simply connected spaces. It then follows from the Seifert-Van Kampen theorem and the fact that the wedge preserves colimits that the Goodwillie tower of a wedge converges on simply connected spaces. The same holds true for the tower of a product: we have seen that this splits as a product. 

Brantner and Heuts~\cite[Theorem 1.5]{BrantnerHeuts} show that in the case of a wedge, convergence fails after suitable chromatic localization, i.e., for every prime $p$ and every height $h$, the $v_h$-periodic Goodwillie calculus of a wedge diverges on spheres of dimension at least two, providing the first example of spaces which are not $\Phi_{K(h)}$-good in the sense of Behrens and Rezk~\cite{BehrensRezk}. The splitting of the calculus of a product together with the categorical properties of the Bousfield-Kuhn functor implies that the $v_h$-periodic Goodwillie tower of a product converges on suitably connected spaces, for details on $v_h$-periodic homotopy see Section~\ref{sec: convergence}. We show that among the simplicial complexes $K$ that we consider (those for which the so-called fat-wedge filtration is trivial) the product is the only example of a polyhedral product which has convergent $v_h$-periodic Goodwillie tower.

\begin{xxthm}\label{Thm: convergence}
If the fat wedge filtration on the simplicial complex $K \neq \Delta^{m-1}$ is trivial, then the $v_h$-periodic Goodwillie tower of
\[
((-), \underline{\ast})^K : \es{S}_\ast^{\times m} \longrightarrow \T,
\]
diverges on spheres of dimension at least two.
\end{xxthm}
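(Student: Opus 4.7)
The plan is to use the multivariable decomposition of Theorem~\ref{Thm: multivariable decomp} to locate a Hilton--Milnor-style weak-product substructure inside the Goodwillie tower of the polyhedral product, and then to transport the Brantner--Heuts divergence argument from the ordinary wedge to this substructure. The underlying idea is that $K \neq \Delta^{m-1}$ forces the polyhedral product to contain genuinely `wedge-like' combinatorial structure, recorded by $\mathscr{I}_K$, and it is this structure that obstructs $\Phi_{K(h)}$-goodness.

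Concretely, the hypothesis $K \neq \Delta^{m-1}$ implies $\mathscr{I}_K \neq \emptyset$: a minimal non-face $I$ of $K$ provides an index $(I, \vec{k}) \in \mathscr{I}_K$ with $|K_I| \simeq \partial \Delta^{|I|-1} \simeq S^{|I|-2}$, and since $\vec{k}$ ranges over positive integer tuples, $\mathscr{I}_K$ is in fact infinite, so $\bb{L}(\mathscr{I}_K)$ is an infinite-dimensional free Lie algebra. Evaluating Theorem~\ref{Thm: multivariable decomp} on $\underline{X} = (S^{d-1}, \ldots, S^{d-1})$ with $d \geq 2$, the decomposition has as one of its two factors
\[
\sideset{}{'}\prod_{\omega \in \bb{L}(\mathscr{I}_K)} \Omega P_\kappa(\id)\bigl(\Sigma \omega \alpha(\underline{X})\bigr),
\]
in which each entry, after specialising to the above minimal non-face indices, is $\Omega P_\kappa(\id)$ applied to a sphere of dimension at least two (it is a smash of copies of $S^{|I|-2}$ and $S^{d-1}$, then suspended). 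This is formally the same shape of weak-product decomposition that Brantner--Heuts produce for the wedge in their Theorem~1.2.

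I would then import Brantner--Heuts' Theorem~1.5 directly: the failure of the Bousfield--Kuhn functor $\Phi_h$ to preserve the filtered colimit implicit in the weak product causes this $\bb{L}(\mathscr{I}_K)$-piece to diverge $v_h$-periodically, even though each individual sphere entering it is itself $\Phi_{K(h)}$-good. The other `product-like' piece $\Omega \prod_i P_{n_i}(\id)(\Sigma X_i)$ of Theorem~\ref{Thm: multivariable decomp} is $v_h$-periodically convergent by the categorical properties of $\Phi_h$ applied to the product case, and so it cannot cancel the divergence coming from the Lie-word piece; the whole tower therefore diverges. The main obstacle is precisely the transport of Brantner--Heuts' non-convergence argument to our setting: one must check that replacing the free Lie algebra on $m$ generators by $\bb{L}(\mathscr{I}_K)$, and the presence of the geometric realisations $|K_I|$ inside the $\alpha_{(I,\vec{k})}$, do not disturb their analysis of how $\Phi_h$ interacts with $\sideset{}{'}\prod$. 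A secondary bookkeeping issue is the passage between the Taylor tower of $((-), \underline{\ast})^K$ on $\underline{\Sigma S^{d-1}}$ and that of $(\Sigma(-), \underline{\ast})^K$ on $\underline{S^{d-1}}$ which Theorem~\ref{Thm: multivariable decomp} actually describes, which is straightforward on connected inputs.
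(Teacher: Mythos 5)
Your proposal matches the paper's proof in all essentials: the paper likewise splits off the convergent product factor, uses a minimal missing face $I_0$ (so that $|K_{I_0}| \simeq \partial\Delta^{I_0}$ is a sphere and there are countably many generators $(I_0,\vec{k})$) to produce infinitely many Lie words $\omega$ with $\Phi_v(\Sigma\omega\alpha(\underline{X}))$ noncontractible on spheres, and then runs the Brantner--Heuts periodicity-plus-pigeonhole argument comparing the weak product of $\Phi_v$'s with the product of the limits, after reducing multi-variable to single-variable convergence by cofinality. One small correction: the divergence mechanism is not a failure of $\Phi_h$ to preserve the filtered colimit defining the weak product (it does preserve it), but rather that the inverse limit over the tower converts the weak product into the full product, which is precisely the direct-sum-versus-direct-product discrepancy you go on to invoke.
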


Somewhat surprisingly, although our decompositions vary drastically, the existence of a decomposition is enough so that our proof of Theorem~\ref{Thm: convergence} follows the same lines as that of Brantner and Heuts. 

\subsection*{Future directions}
Our Theorem~\ref{Thm: multivariable decomp} and Theorem~\ref{Thm: convergence} raise many interesting questions about the relationship between Goodwillie calculus, chromatic homotopy theory and polyhedral products. We conclude this section by offering a sampling of these questions in the hopes of future discussions. 
 
In light of Theorem~\ref{Thm: convergence}, it seems that a generic simplicial complex `looks more like a wedge than a product' in the sense that it has divergent $v_h$-periodic Goodwillie tower. We wonder if this property is a consequence of having a decomposition of the Ganea fibre, and might fail for `wilder' simplicial complexes.

\begin{ques} 
Does there exist a simplicial complex $K\neq \Delta^{m-1}$, for which the $v_h$-periodic Goodwillie tower of
\[
((-), \underline{\ast})^K : \es{S}_\ast^{\times m} \longrightarrow \T,
\]
converges on spheres of dimension at least two?
\end{ques}

The polyhedral products we have considered have always been generated by pairs of the form $(X_i, \ast)$ or $(CX_i, X_i)$, i.e., pairs with a `single' variable. For many applications of polyhedral products, e.g., to toric geometry, one should consider more general pairs of the form $(X_i, A_i)$, where $A_i$ is a subspace of $X_i$, and hence the calculus of the functor 
\[
((-), (-))^K : \es{P}^{\times m} \longrightarrow \T, \ (\underline{X}, \underline{A}) \longmapsto (\underline{X}, \underline{A})^K,
\]
where $\es{P}$ is the category of pairs of pointed spaces. In~\cite[Theorem 2.3]{HaoSunTheriault}, Hao, Sun and Theriault provide a loop decomposition of general polyhedral products $(\underline{X}, \underline{A})$ in terms of the product and the polyhedral product $(C\underline{Y}, \underline{Y})$ where $Y_i$ is the fibre of the inclusion $A_i \to X_i$. 

\begin{ques}\label{Q: calc on pairs}
Is the Hao-Sun-Theriault loop decomposition natural and does a suitable notion of calculus exist for functors of the form $F: \es{P}^{\times m} \longrightarrow \T,$ so that the Goodwillie calculus of the functor $((-), (-))^K$ can be described in terms of Theorem~\ref{Thm: multivariable decomp} applied to $\underline{Y}$? 
\end{ques}

In many ways, one could interpret the content of our Theorem~\ref{Thm: multivariable decomp} as the statement: a loop decomposition of the functor $(-, \underline{\ast})^K$, induces a decomposition of the associated Goodwillie calculus in terms of the Goodwillie tower of the identity. 

\begin{ques}
Are there other loop decompositions of polyhedral products that give alternative decompositions of the Goodwillie tower? Theorem~\ref{Thm: multivariable decomp} is in absolute terms but one could try to construct a \emph{relative} decomposition in terms of more general subcomplexes. 
\end{ques}

\subsection*{Acknowledgements}
The authors were supported by the European Research Council (ERC) through the grant ``Chromatic homotopy theory of spaces'', grant no. 950048. We are grateful to Stephen Theriault for providing feedback on an earlier draft of this work, and Christian Carrick for patiently sharing an office with us during the production of this article.

\section{Goodwillie calculus}
Goodwillie calculus is a well-established tool for studying functors $F: \es{C} \to \es{D}$ between $\infty$-categories satisfying mild hypotheses, see e.g.,~\cite{GoodCalcI,GoodCalcII, GoodCalcIII} for the original exposition, and~\cite[\S6]{HA} for a modern account in full generality. In this section, we present a brief overview of the main concepts. The reader unfamiliar with $\infty$-categories is welcome to substitute the $\infty$-categories $\es{C}$ and $\es{D}$ for the category of pointed spaces, and replace any (co)limits by homotopy colimits. 

\subsection{Single-variable Goodwillie calculus}
Throughout this section we Let $\es{C}$ be an $\infty$-category with finite colimits and $\es{D}$ be a \emph{differential $\infty$-category}, i.e., an $\infty$-category with finite limits and filtered colimits which commute. The driving force behind single-variable Goodwillie calculus is the categorification of differential calculus on the $\infty$-category $\Fun(\es{C}, \es{D})$ of functors from $\es{C}$ to $\es{D}$.  

Denote by $\es{P}(n)$ the poset of subsets of $[n]$. An \emph{$n$-cube} in $\es{C}$ is a functor $X: \es{P}(n) \to \es{C}$. An $n$-cube $X$ is said to be \emph{strongly cocartesian} if it is left Kan extended from the poset $\es{P}(n)_{\leq 1}$ of subsets of $[n]$ of cardinality at most one along the canonical inclusion $\es{P}(n)_{\leq 1} \hookrightarrow \es{P}(n)$, i.e., if each face is a pushout. An $n$-cube is \emph{(co)cartesian} if it is a (co)limit diagram. A functor $F: \es{C} \to \es{D}$ is \emph{$n$-excisive} if it sends strongly cocartesian $(n+1)$-cubes to cartesian $(n+1)$-cubes. We denote by $\exc{n}(\es{C}, \es{D})$ the full sub-$\infty$-category spanned by the $n$-excisive functors. The key theorem in the construction of the calculus is the following result of Goodwillie~\cite[\S1]{GoodCalcIII}, a proof of which in full generality is provided by Lurie~\cite[Theorem 6.1.1.10]{HA}.

\begin{thm}\label{thm: existence excisive}
For each $n \in \bb{N}$, the inclusion of $\infty$-categories
\[
\exc{n}(\es{C}, \es{D}) \hookrightarrow \Fun(\es{C}, \es{D}),
\]
admits a left exact left adjoint
\[
\pushQED{\qed} 
P_n : \Fun(\es{C}, \es{D}) \longrightarrow \exc{n}(\es{C}, \es{D}).\qedhere
\popQED
\]   
\end{thm}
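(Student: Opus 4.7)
The plan is to follow Goodwillie's original construction, defining $P_n F$ as a filtered colimit of a naive approximation functor applied iteratively; the hypothesis that $\es{D}$ is differential (finite limits commute with filtered colimits) is precisely what makes everything go through cleanly in this generality.

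First I would construct an auxiliary endofunctor $T_n$ on $\Fun(\es{C},\es{D})$. Given an object $X \in \es{C}$, let $C_X : \es{P}(n+1) \to \es{C}$ denote a universal strongly cocartesian $(n+1)$-cube with initial vertex $X$ built from the $n+1$ copies of the cone inclusion $X \to \ast$ (more precisely, the left Kan extension from $\es{P}(n+1)_{\leq 1}$ sending $\emptyset \mapsto X$ and each singleton to $\ast$, which exists because $\es{C}$ has finite colimits). Define
\[
T_n F(X) \;=\; \lim_{\emptyset \neq U \subseteq [n+1]} F(C_X(U)).
\]
There is a canonical natural transformation $t_n(F) : F \Rightarrow T_n F$ coming from the initial vertex, and one checks that $F$ is $n$-excisive if and only if $t_n(F)$ is an equivalence — indeed $n$-excisiveness means exactly that $F$ applied to the cube $C_X$ is cartesian.

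Next I would set $P_n F := \colim\bigl( F \xrightarrow{t_n} T_n F \xrightarrow{t_n} T_n^2 F \xrightarrow{t_n} \cdots \bigr)$, taken pointwise in $\es{D}$. The heart of the argument is to show that $P_n F$ is $n$-excisive, i.e.\ that $t_n(P_n F)$ is an equivalence. This step is where the differential hypothesis is indispensable: $T_n$ is built from finite limits, so it commutes with the filtered colimit defining $P_n F$, giving $T_n(P_n F) \simeq \colim_k T_n^{k+1} F \simeq P_n F$, and a cofinality argument identifies this equivalence with $t_n(P_n F)$. I would expect this compatibility check — tracking that the shifted and unshifted colimits agree via the correct natural transformation — to be the main obstacle, and it is the step that genuinely requires the axioms on $\es{D}$.

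With $n$-excisiveness in hand, universality of the natural map $\eta : F \to P_n F$ follows formally: for any $n$-excisive $G$, the map $\Map(P_n F, G) \to \Map(F, G)$ is an equivalence because $t_n$ is an equivalence on $G$, so precomposition with the cofinal sequence $F \to T_n F \to \cdots$ is an equivalence onto $\Map(F,G)$. This gives the adjunction $P_n \dashv \iota$. Finally, left exactness of $P_n$ is immediate from the construction: $T_n$ is a finite limit of evaluations of $F$, hence preserves finite limits in $F$, and filtered colimits in $\es{D}$ commute with finite limits by assumption, so $P_n$ preserves finite limits. This completes the sketch; the only subtle point is the $n$-excisiveness of $P_n F$, and all remaining verifications are formal consequences of adjointness and the commutation of filtered colimits with finite limits in $\es{D}$.
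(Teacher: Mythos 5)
Your construction is the standard Goodwillie--Lurie one (and the one the paper alludes to in the remark following the theorem): form $T_nF(X)$ as the limit of $F(C_S(X))$ over the punctured cube $\es{P}_0(n+1)$ of $S$-pointed cones, iterate, and take the filtered colimit. The universality and left-exactness arguments at the end are correct and formal. But the step you yourself flag as the heart of the matter contains a genuine gap. You assert that ``$F$ is $n$-excisive if and only if $t_n(F)$ is an equivalence.'' Only the ``only if'' direction is true. The transformation $t_n(F)$ being an equivalence says that $F$ carries the \emph{particular} strongly cocartesian cubes $S \mapsto C_S(X)$ (those generated by the cone maps $X \to \ast$) to cartesian cubes; $n$-excisiveness requires this for \emph{every} strongly cocartesian $(n+1)$-cube in $\es{C}$. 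Consequently, your argument that $T_n$ commutes with the filtered colimit --- which correctly shows $T_n(P_nF) \simeq P_nF$ via $t_n(P_nF)$ --- establishes only that $P_nF$ is a fixed point of $T_n$, not that it is $n$-excisive.

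The missing ingredient is the genuinely geometric lemma of Goodwillie (Lemma 1.9 of \emph{Calculus III}; Lurie's Lemma 6.1.1.26): for an arbitrary strongly cocartesian $(n+1)$-cube $\mc{X} \colon \es{P}(n+1) \to \es{C}$, the natural map of cubes $F \circ \mc{X} \to T_nF \circ \mc{X}$ factors through a cartesian $(n+1)$-cube in $\es{D}$. One then passes to the colimit over the iterates of $T_n$ --- and it is \emph{here}, in commuting that filtered colimit past the finite limit computing the cartesian gap map of the cube, that the differentiability of $\es{D}$ is used --- to conclude that $P_nF \circ \mc{X}$ is cartesian. No cofinality bookkeeping on the tower of iterates can substitute for this lemma. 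A smaller point: for the cones $C_S(X)$ to exist you need a terminal object of $\es{C}$ in addition to finite colimits (this hypothesis is present in Lurie's Theorem 6.1.1.10 and suppressed in the paper's statement).
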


The upshot in practice is that one gets, for each functor $F : \es{C} \to \es{D}$, a functor $P_n F$, together with a natural transformation $F \to P_n F$, such that any natural transformation $F \to G$ for $G$ an $n$-excisive functor factors uniquely over $P_n F$. In other words, $P_n F$ is initial among $n$-excisive functors under $F$. 

\begin{rem}
Let $S$ be a finite set, and denote by $C_S(X)$ the $S$-pointed cone on $X\in \es{C}$ as in~\cite[
Construction 6.1.1.18]{HA}. Define $T_nF$ by the formula
\[
T_nF(X) = \lim_{S \in \es{P}_0(n+1)} F(C_S(X)).
\]
This definition is functorial in $F$ and $X$ and we may define $P_n$ to the fixed points of the action of $T_n$ on $F$, i.e.,
\[
P_nF(X) = \underset{q \in \bb{N}}{\colim}~T_n^qF(X).
\]
If $\es{C} = \T$, then the $S$-pointed cone is precisely what one would imagine, e.g., $C_\varnothing(X) = X$, $C_{[1]}X = CX$ and $C_{[2]}(X) =\Sigma X$.
\end{rem}

There is an inclusion of $\infty$-categories
\[
\exc{n}(\es{C}, \es{D}) \hookrightarrow \exc{(n+1)}(\es{C}, \es{D}),
\]
induced by the fact that every $n$-excisive functor is $(n+1)$-excisive, and it follows that for any functor $F: \es{C} \to \es{D}$, there is a tower of excisive approximations
\[
F \cdots \longrightarrow P_nF \longrightarrow P_{n-1}F \longrightarrow \cdots \longrightarrow P_1 F\longrightarrow P_0F,
\]
which we call the \emph{Goodwillie tower of $F$}.

\subsection{Multi-variable Goodwillie calculus}
Our attention now turns to the Goodwillie calculus of functors of the form $F: \es{C}_1 \times \cdots \times \es{C}_m \to \es{D}$. Here each $\es{C}_i$ should have finite colimits and $\es{D}$ should be a differential $\infty$-category. In our applications, we will work predominately with multi-variable functors on pointed spaces and encourage the reader less familiar with $\infty$-categories to keep this example in mind. Full details in full generality can be found in~\cite[\S6.1.3]{HA}, for the case of multi-variable functors of spaces, a short account is provided in~\cite{AroneKankaanrinta}.

Let $\Vec{n}=(n_1, \dots, n_m) \in \bb{N}^{\times m}$ be a multi-index. A multi-variable functor $F: \es{C}_1 \times \cdots \times \es{C}_m \to \es{D}$ is \emph{$\Vec{n}$-excisive} if for each $1\leq i \leq n$ it is $n_i$-excisive in the $i$-th variable, i.e., if each functor
\[
\es{C}_i \hookrightarrow \es{C}_i \times \prod_{j\neq i} \{X_j\} \hookrightarrow \prod_{j=1}^m \es{C}_j \xrightarrow{F} \es{D},
\]
is $n_i$-excisive. Denote by $\exc{\Vec{n}}(\prod_{i=1}^m \es{C}_i, \es{D})$ the full sub-$\infty$-category of $\Fun(\prod_{i=1}^m \es{C}_i, \es{D})$ spanned by the $\Vec{n}$-excisive functors. By definition, 
\[
\exc{\Vec{n}}(\textstyle\prod_{i=1}^m \es{C}_i, \es{D}) \simeq \exc{n_1}(\es{C}_1, \exc{\Vec{n'}}(\prod_{i=2}^m \es{C}_i, \es{D}),
\]
where $\Vec{n'} = (n_2, \dots, n_m)$, thus one may inductively construct a universal $\Vec{n}$-excisive approximation functor, see e.g.,~\cite[Remark 1.22]{GoodCalcIII} and~\cite[Proposition 6.1.3.6]{HA}.

\begin{thm}\label{thm: existence multi-variable excisive}
The inclusion of $\infty$-categories
\[
\exc{\Vec{n}}(\textstyle\prod_{i=1}^m \es{C}_i, \es{D})  \hookrightarrow \Fun(\prod_{i=1}^m \es{C}_i, \es{D}),
\]
has a left exact left adjoint
\[
\pushQED{\qed} 
P_{\Vec{n}} : \Fun(\textstyle\prod_{i=1}^m \es{C}_i, \es{D}) \longrightarrow \exc{\Vec{n}}(\prod_{i=1}^m \es{C}_i, \es{D}).\qedhere
\popQED
\] 
\end{thm}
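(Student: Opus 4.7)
The natural plan is to proceed by induction on the number of variables $m$. The base case $m=1$ is precisely Theorem~\ref{thm: existence excisive}, so the work is in setting up the inductive step. The key identification to exploit is the one already noted in the excerpt,
\[
\exc{\Vec{n}}(\textstyle\prod_{i=1}^m \es{C}_i, \es{D}) \simeq \exc{n_1}\left(\es{C}_1, \exc{\Vec{n'}}(\prod_{i=2}^m \es{C}_i, \es{D})\right),
\]
with $\Vec{n'} = (n_2, \dots, n_m)$. This follows from the currying equivalence $\Fun(\es{C}_1 \times \prod_{i=2}^m \es{C}_i, \es{D}) \simeq \Fun(\es{C}_1, \Fun(\prod_{i=2}^m \es{C}_i, \es{D}))$ combined with the observation that $\Vec{n}$-excisiveness is, by definition, a condition imposed separately on each variable with the others held fixed.

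Before the single-variable theorem can be applied to the iterated functor category, we must check that $\exc{\Vec{n'}}(\prod_{i=2}^m \es{C}_i, \es{D})$ is itself a differential $\infty$-category. Since $\Vec{n'}$-excisiveness is a condition phrased entirely in terms of finite limits in $\es{D}$, and since finite limits commute with each other, the subcategory $\exc{\Vec{n'}}$ is closed under pointwise finite limits in $\Fun(\prod_{i=2}^m \es{C}_i, \es{D})$. The same reasoning shows closure under pointwise filtered colimits, using that filtered colimits commute with finite limits in the differential category $\es{D}$; this commutation then descends to $\exc{\Vec{n'}}$.

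Granted this, the inductive hypothesis supplies a left exact left adjoint $P_{\Vec{n'}}$ to the inclusion $\exc{\Vec{n'}}(\prod_{i=2}^m \es{C}_i, \es{D}) \hookrightarrow \Fun(\prod_{i=2}^m \es{C}_i, \es{D})$. Post-composing with this gives a left exact left adjoint $\Fun(\es{C}_1, P_{\Vec{n'}})$ to $\Fun(\es{C}_1, \exc{\Vec{n'}}) \hookrightarrow \Fun(\es{C}_1, \Fun(\prod_{i=2}^m \es{C}_i, \es{D}))$. Now apply Theorem~\ref{thm: existence excisive} to the target $\exc{\Vec{n'}}$, which we have just verified to be differential, to obtain a left exact left adjoint to $\exc{n_1}(\es{C}_1, \exc{\Vec{n'}}) \hookrightarrow \Fun(\es{C}_1, \exc{\Vec{n'}})$. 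Composing the two left adjoints (and transporting back across the currying equivalence) yields $P_{\Vec{n}}$; left exactness is preserved under composition.

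The main obstacle, and essentially the only non-formal content of the argument, is the verification in the second paragraph that excisive subcategories inherit the differential structure from $\es{D}$. Everything else reduces to careful bookkeeping with adjunctions and the currying equivalence, so once the closure of $\exc{\Vec{n'}}$ under finite limits and filtered colimits is in hand, the induction runs smoothly.
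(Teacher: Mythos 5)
Your argument is correct and is essentially the paper's own: the paper records the currying identification $\exc{\Vec{n}}(\prod_{i=1}^m \es{C}_i, \es{D}) \simeq \exc{n_1}(\es{C}_1, \exc{\Vec{n'}}(\prod_{i=2}^m \es{C}_i, \es{D}))$ and then constructs $P_{\Vec{n}}$ inductively from the single-variable Theorem~\ref{thm: existence excisive}, citing Goodwillie and Lurie for the details. Your extra verification that $\exc{\Vec{n'}}$ inherits the differential structure (closure under pointwise finite limits and filtered colimits) is exactly the point one needs to check before applying the single-variable theorem to the iterated functor category, so the proposal fills in the paper's sketch correctly.
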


\begin{rem}
We can give a construction of $P_{\Vec{n}}$ analogous to the construction of the universal $n$-excisive approximation in the single-variable situation: define 
\[
T_{\Vec{n}}F(X_1, \dots, X_m) = \lim_{(S_i) \in \prod_{i=1}^m \es{P}_0(n_i+1)} F(C_{S_1}(X_1), \dots, C_{S_m}(X_m)),
\]
and define 
\[
P_{\Vec{n}}F(X_1, \dots, X_m) = \underset{q \in \bb{N}}{\colim}~T_{\Vec{n}}^qF(X_1, \dots, X_m).
\]
\end{rem}

\begin{rem}
An alternative definition of the universal $\Vec{n}$-excisive approximation is given by Arone and Kankaanrinta~\cite{AroneKankaanrinta}. They define the universal $\Vec{n}$-excisive approximation of $F$ as
\[
T_{\Vec{n}}^{\Vec{\infty}}F(X_1, \dots, X_m) = \underset{(q_i) \in \bb{N}^{\times m}}{\colim}~T_{\Vec{n}}^{(q_1, \dots, q_m)}F(X_1, \dots, X_m).
\]
The ordering on multi-indices (see below) tells us that these definitions agree (up to homotopy) by a (right) cofinality argument, see e.g.,~~\cite[\href{https://kerodon.net/tag/03U2}{Corollary 03U2}]{kerodon}.
\end{rem}

Multi-indices come with a canonical ordering: $\Vec{k}=(k_1, \dots, k_m) \leq \Vec{n} = (n_1, \dots, n_m)$ if $k_i \leq n_i$ for each $1 \leq i \leq m$. Denoting by $\Vec{n}-1$ the multi-index $(n_1-1, \dots, n_m-1)$, we see that $\Vec{n}-1 \leq \Vec{n}$. This induces a map $P_{\Vec{n}}F \to P_{\Vec{n}-1}F$ which replaces the Goodwillie tower in this multi-variable setting. 

\begin{rem}
Given a multi-variable functor $F: \prod_{i=1}^m \es{C}_i \to \es{D}$, the single variable and multi-variable excisive approximations to $F$ commute. Since both are filtered colimits of finite limit constructions it suffices to show that for $k \in \bb{N}$ and $\Vec{n}$ a multi-indexed there is an equivalence $T_k T_{\Vec{n}}F \simeq T_{\Vec{n}}T_kF$. This can readily be checked using the constructions of $T_k$ and $T_{\Vec{n}}$ as homotopy limits of punctured cubes and that the $S$-pointed cone construction on $\prod_{i=1}^m \es{C}_i$ is defined component-wise.
\end{rem}

\section{Integral loop space decompositions of the polyhedral product}\label{sec: integral decomps}

Write $[m]=\{1, 2, \dots, m\}$ for the set of $m$ elements. Suppose given a sequence of pairs of spaces $(\underline{X}, \underline{A}):=\{(X_i,A_i)\}_{i \in [m]}$, where for each $i \in [m]$ the space $A_i$ is a subspace of $X_i$. Let $K$ be a simplicial complex on the vertex set $[m]$. For each simplex $\sigma \in K$, we write
\[
(\underline{X}, \underline{A})^\sigma := \prod_{i=1}^m W_i, \hspace{3ex} \text{where} \hspace{3ex}  W_i = 
\begin{cases} 
X_i & i \in \sigma \\ 
A_i & i \not \in \sigma. 
\end{cases}
\]
The \emph{polyhedral product} of $(\underline{X},\underline{A})$ with respect to $K$ is then defined to be
\[
(\underline{X},\underline{A})^K = \bigcup_{\sigma \in K} (\underline{X}, \underline{A})^\sigma \subset \prod_{i=1}^m X_i.
\]
For a given $K$, our arguments depend on having a decomposition of the polyhedral product functor 
\[
\Omega(-, \underline{\ast})^K: \es{S}_\ast^{\times m} \longrightarrow \T, \ \underline{X} \longmapsto \Omega(\underline{X}, \underline{\ast})^K,
\] 
that can play the role that the Hilton-Milnor theorem plays in the case of a wedge \cite{BrantnerHeuts}. The decomposition we will use is (in full generality) a combination of the work of Denham-Suciu \cite{DenhamSuciu} and Iriye-Kishimoto \cite{IriyeKishimoto2}, which we summarise here.

\subsection{A split fibre sequence}

First, we will need the following, which is essentially a corollary of a result of Denham and Suciu~\cite[Lemma 2.3.1]{DenhamSuciu} noting that $(\underline{X},\underline{X})^K = \prod_{i=1}^m X_i$. The formalism we give is due to Iriye and Kishimoto~\cite[Proposition 2.6]{IriyeKishimoto}.

\begin{lem}\label{lem: Denham Suciu sequence}
Let $K$ be a simplicial complex on $[m]$. There is a fibre sequence
\[
(C\Omega\underline{X},\Omega\underline{X})^K \longrightarrow (\underline{X},\underline{\ast})^K \longrightarrow \prod_{i=1}^m X_i,
\]
natural in $\underline{X}$. \qed
\end{lem}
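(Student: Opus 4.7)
The plan is to realize the claimed fibre sequence as the pullback of the standard path-loop fibration on the total product, following the strategy of Denham--Suciu. First, for each $i\in[m]$, take the path-loop fibration $\Omega X_i \to PX_i \to X_i$ (evaluation at the endpoint), where $PX_i$ denotes based paths in $X_i$. Taking the product over $i$ yields a fibration
\[
\textstyle\prod_i \Omega X_i \longrightarrow \prod_i PX_i \longrightarrow \prod_i X_i,
\]
whose total space $\prod_i PX_i$ is contractible.

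Next, form the pullback square
\[
\begin{tikzcd}
E \ar[r]\ar[d] & \prod_i PX_i \ar[d] \\
(\underline{X},\underline{\ast})^K \ar[r,hook] & \prod_i X_i,
\end{tikzcd}
\]
where the bottom map is the defining inclusion. Since the right-hand vertical map is a fibration with contractible total space, the left vertical map is a fibration whose total space $E$ computes the homotopy fibre of the bottom inclusion. The key step is to identify $E$ explicitly with $(\underline{P X},\underline{\Omega X})^K$: a point of $E$ is a tuple of based paths $(p_1,\dots,p_m)\in\prod_i PX_i$ such that the tuple of endpoints $(p_1(1),\dots,p_m(1))$ lies in $(\underline{X},\underline{\ast})^K$; unravelling the definition of the polyhedral product, this is equivalent to requiring that there exists a simplex $\sigma\in K$ with $p_i(1)=\ast$ (i.e., $p_i\in\Omega X_i$) for every $i\notin\sigma$, which is exactly the definition of $(\underline{PX},\underline{\Omega X})^K$ as a subspace of $\prod_i PX_i$.

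Finally, since $PX_i$ is contractible with $\Omega X_i$ a subspace, the pair $(PX_i,\Omega X_i)$ is naturally weakly equivalent to the pair $(C\Omega X_i,\Omega X_i)$, and the polyhedral product functor sends level-wise weak equivalences of pairs (with well-behaved inclusions) to weak equivalences. This gives $(\underline{PX},\underline{\Omega X})^K \simeq (C\Omega\underline{X},\Omega\underline{X})^K$, producing the desired fibre sequence. Naturality in $\underline{X}$ is automatic, because every construction used—based paths, products, polyhedral products, and pullbacks—is functorial in the input pairs.

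The only mildly subtle point is the compatibility between the honest polyhedral product on the pairs $(PX_i,\Omega X_i)$ and the cofibrantly modelled $(C\Omega X_i,\Omega X_i)$; this is standard once one checks that $\Omega X_i\hookrightarrow PX_i$ is a (Hurewicz) cofibration and invokes the usual homotopy invariance of the polyhedral product functor on such pairs (see e.g.~\cite{IriyeKishimoto}). I do not expect genuine obstacles beyond bookkeeping.
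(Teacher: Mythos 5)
Your proof is correct and is essentially the argument the paper is implicitly invoking: the paper states this lemma without proof, citing Denham--Suciu and Iriye--Kishimoto, and their proof is exactly your pullback of the path--loop fibration $\prod_i PX_i \to \prod_i X_i$ along the inclusion, the identification of the pullback with $(\underline{PX},\underline{\Omega X})^K$, and the replacement of the pairs $(PX_i,\Omega X_i)$ by $(C\Omega X_i,\Omega X_i)$ via homotopy invariance of the polyhedral product. No gaps.
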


It is standard that the inclusion $\bigvee_{i=1}^m X_i \to \prod_{i=1}^m X_i$ has a natural left homotopy inverse after looping. If $K$ contains each vertex $i \in [m]$, then this inclusion factors through $(\underline{X},\underline{\ast})^K$, so the map $(\underline{X},\underline{\ast})^K \to \prod_{i=1}^m X_i$ also has a natural left homotopy inverse after looping. Combining this left homotopy inverse with the map $\Omega (C\Omega\underline{X},\Omega\underline{X})^K  \to \Omega (\underline{X},\underline{\ast})^K$ using the loop multiplication on the target gives the following standard corollary.

\begin{cor}\label{cor: cone fibre seq splits} 
Let $K$ be a simplicial complex on $[m]$. There is a homotopy equivalence 
\[
\Omega (C\Omega\underline{X},\Omega\underline{X})^K \times \prod_{i=1}^m \Omega X_i \xrightarrow{\ \sim \ } \Omega (\underline{X},\underline{\ast})^K,
\]
natural in $\underline{X}$. \qed
\end{cor}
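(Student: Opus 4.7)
The plan is to follow the outline given in the paragraph preceding the statement: loop the fibre sequence from Lemma \ref{lem: Denham Suciu sequence}, construct a natural section of the resulting loop projection by routing the standard wedge splitting through $(\underline{X},\underline{\ast})^K$, and then combine this section with the loop of the fibre inclusion using the H-space structure on $\Omega(\underline{X},\underline{\ast})^K$.

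In more detail, I first apply $\Omega$ to the Denham-Suciu sequence to obtain
\[
\Omega(C\Omega\underline{X},\Omega\underline{X})^K \xrightarrow{\iota} \Omega(\underline{X},\underline{\ast})^K \xrightarrow{\pi} \prod_{i=1}^{m} \Omega X_i,
\]
identifying $\Omega\prod_i X_i$ with $\prod_i \Omega X_i$. Next I produce a natural section $s$ of $\pi$. For each vertex $i$ the wedge inclusion $X_i \hookrightarrow \bigvee_j X_j$ induces $\Omega X_i \to \Omega\bigvee_j X_j$; multiplying these using the loop multiplication produces a map $\prod_i \Omega X_i \to \Omega\bigvee_j X_j$, and a direct computation with the coordinate projections $\bigvee_j X_j \to X_k$ shows that its composition with $\Omega(\bigvee_j X_j \to \prod_j X_j)$ is the identity on $\prod_i \Omega X_i$. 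Under the standing assumption that $K$ contains every vertex of $[m]$, the wedge inclusion factors through $(\underline{X},\underline{\ast})^K$; composing with this factorisation gives the desired natural section $s : \prod_i \Omega X_i \to \Omega(\underline{X},\underline{\ast})^K$.

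Finally, I use the loop multiplication on the target to define the natural map
\[
\Omega(C\Omega\underline{X},\Omega\underline{X})^K \times \prod_{i=1}^{m} \Omega X_i \longrightarrow \Omega(\underline{X},\underline{\ast})^K,\qquad (x,y)\longmapsto \iota(x)\cdot s(y).
\]
That this is a weak equivalence is the standard splitting principle for a fibration equipped with a section after looping: because $\pi$ admits a section, the long exact sequence of homotopy groups of the looped fibration degenerates into split short exact sequences $0 \to \pi_n\Omega F \to \pi_n\Omega E \to \pi_n\Omega B \to 0$ with splitting supplied by $s_{\ast}$, and since $\Omega(\underline{X},\underline{\ast})^K$ is an H-space, the displayed map realises on $\pi_n$ exactly the isomorphism $\iota_{\ast}\oplus s_{\ast}$. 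Naturality in $\underline{X}$ is automatic, since each ingredient (Lemma \ref{lem: Denham Suciu sequence}, the wedge inclusion, and the loop multiplication) is natural.

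I anticipate no genuine obstacle: the entire content is the combination of the standard fact that the wedge inclusion has a left homotopy inverse after looping with the standard splitting principle above, both of which are formal.
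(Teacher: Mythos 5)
Your argument is correct and is essentially identical to the paper's intended proof: the paragraph preceding the corollary constructs the same natural section of the looped projection $\Omega(\underline{X},\underline{\ast})^K \to \prod_{i=1}^m \Omega X_i$ by routing the standard wedge splitting through $(\underline{X},\underline{\ast})^K$ (using that $K$ contains every vertex), and then combines it with the looped fibre inclusion from Lemma~\ref{lem: Denham Suciu sequence} via the loop multiplication, exactly as you do.
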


It follows that to identify the homotopy type of the right-hand side, it suffices to identify the homotopy type of polyhedral products of the form $(C\underline{Y},\underline{Y})^K$. This has been done for a fairly large class of simplicial complexes.

\subsection{The decomposition of Iriye-Kishimoto} 
The polyhedral product $(D^1, S^0)^K$ is called the \emph{real moment-angle complex on $K$}, and it plays a key role in the integral decompositions. The skeletal filtration of the full $(m-1)$-simplex provides a filtration of the product
\[
\ast = (\underline{X}, \underline{\ast})^{\sf{sk}_{-1}\Delta^{m-1}} \hookrightarrow (\underline{X}, \underline{\ast})^{\sf{sk}_0\Delta^{m-1}}  \hookrightarrow \cdots \hookrightarrow (\underline{X}, \underline{\ast})^{\sf{sk}_{m-1}\Delta^{m-1}} = \prod_{i=1}^m X_i,
\]
which we call the \emph{fat wedge filtration}, following Iriye and Kishimoto~\cite{IriyeKishimoto2}. For any subspace $Y \subset \prod_{i=1}^m X_i,$ we obtain a filtration
\[
\ast = Y \cap (\underline{X}, \underline{\ast})^{\sf{sk}_{-1}\Delta^{m-1}} \hookrightarrow Y \cap (\underline{X}, \underline{\ast})^{\sf{sk}_0\Delta^{m-1}}  \hookrightarrow \cdots \hookrightarrow Y \cap(\underline{X}, \underline{\ast})^{\sf{sk}_{m-1}\Delta^{m-1}} = Y \cap\prod_{i=1}^m X_i,
\]
called the \emph{fat wedge filtration on $Y$}. For $Y=(D^1, S^0)^K$ and $X_i = D^1$ we obtain the \emph{fat wedge filtration on the real moment angle complex}. Iriye and Kishimoto~\cite[Theorem 3.1]{IriyeKishimoto2} prove that the $k$-th stage of the fat wedge filtration on the real moment angle complex is obtained from the $(k-1)$-st stage by attaching cones, and we say that the fat wedge filtration is \emph{trivial} if these attaching maps are null. The fat wedge filtration on the real moment angle complex is trivial for a large class of simplicial complexes see e.g.,~\cite[\S7-10]{IriyeKishimoto2}, in particular, for \emph{shifted} simplicial complexes: whenever $\sigma$ is a simplex of $K$, $i$ is a vertex of $\sigma$, and $j>i$, then the simplex $(\sigma \setminus \{i\}) \cup \{j\}$ obtained by replacing $i$ with $j$ is also a simplex of $K$.

Bahri, Bendersky, Cohen, and Gitler provided~\cite[Theorem 2.10]{BBCGFunctor} an integral decomposition of polyhedral products on any simplicial complex $K$ after a single suspension and conjectured~\cite[Conjecture 2.29]{BBCGFunctor} that their decomposition desuspend when $K$ is a shifted complex. This conjecture was proven independently by Grbi\'{c}-Theriault \cite{GrbicTheriault} and Iriye-Kishimoto \cite{IriyeKishimoto}, generalizing classical results of Porter~\cite{PorterHOWP}. The result was then further generalized by Iriye and Kishimoto \cite{IriyeKishimoto2} to simplicial complexes $K$ for which the fat wedge filtration of the real moment-angle complex is trivial. Our application depends very strongly on knowing that the decomposition we have is natural in $\underline{X}$. This naturality is not included in the statement of Iriye and Kishimoto's main theorem, but the more detailed assertion below is included in the proof \cite[Proof of Theorem 1.2]{IriyeKishimoto2}.

\begin{thm} \label{thm: shifted complex homotopy type}
Let $K$ be a simplicial complex on $[m]$. Suppose that the fat wedge filtration of the real moment-angle complex on $K$ is trivial. There is a functor $D: \es{S}_\ast^{\times m} \to \T$, depending on $K$, and a zigzag of homotopy equivalences of spaces
\[
(C\underline{X},\underline{X})^K \xleftarrow{\ \sim\ } D(\underline{X}) \xrightarrow{ \ \sim \ } \bigvee_{\emptyset \neq I \subseteq [m]} |\Sigma K_I| \wedge \widehat{X}^I,
\]
where $\widehat{X}^I = \bigwedge_{i \in I} X_i$, which is natural in $\underline{X}$. \qed
\end{thm}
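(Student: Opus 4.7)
The plan is to deduce this directly from the proof of~\cite[Theorem~1.2]{IriyeKishimoto2}. That theorem already furnishes the underlying homotopy equivalences; our only additional task is to verify that the constructions in Iriye-Kishimoto's argument can be arranged to be functorial in $\underline{X}$, which is implicit but not made explicit in the statement of their result.

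First, I would recall the inductive description of $(C\underline{X},\underline{X})^K$ coming from the fat wedge filtration. Writing the filtration as
\[
\ast = F_{-1}(\underline{X}) \hookrightarrow F_0(\underline{X}) \hookrightarrow \cdots \hookrightarrow F_{m-1}(\underline{X}) = (C\underline{X},\underline{X})^K,
\]
Iriye-Kishimoto show that each inclusion $F_{k-1}(\underline{X}) \hookrightarrow F_k(\underline{X})$ fits into a pushout square whose attaching map, indexed by subsets $I \subseteq [m]$ of cardinality $k+1$, is (up to smashing with $\widehat{X}^I$) the attaching map of the $k$-th stage of the fat wedge filtration on the real moment-angle complex $(D^1,S^0)^K$. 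All the ingredients here --- the filtration stages, the pushout squares, and the identification of the attaching maps --- are manifestly functorial in $\underline{X}$, because smashing with $\widehat{X}^I$ and the structural inclusions of subcomplexes of $K$ are.

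Second, the triviality hypothesis provides null-homotopies for the attaching maps of the fat wedge filtration on the real moment-angle complex. Smashing these null-homotopies with the identity on $\widehat{X}^I$ gives compatible null-homotopies for the attaching maps of $F_{k-1}(\underline{X}) \hookrightarrow F_k(\underline{X})$. Define $D(\underline{X})$ to be the iterated homotopy pushout obtained by replacing each attaching map by its null-homotopy (realized as a double mapping cylinder). Then $D(\underline{X})$ carries two comparison maps: one back to $(C\underline{X},\underline{X})^K$ collapsing each null-homotopy back onto its endpoint, and one forward to $\bigvee_{\emptyset \neq I}|\Sigma K_I| \wedge \widehat{X}^I$ collapsing the filtration stages and identifying each successive cofibre as $|\Sigma K_I| \wedge \widehat{X}^I$. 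The first map is an equivalence by standard cofibration arguments, and the second map is an equivalence because once the attaching maps are replaced by constants, the iterated pushout literally splits as the desired wedge.

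The main obstacle is naturality in $\underline{X}$. A priori, null-homotopies are highly non-canonical choices, and composition of naturality squares can easily fail if these choices are made independently for each $\underline{X}$. The crucial observation --- implicit in~\cite{IriyeKishimoto2} --- is that the null-homotopies may be chosen \emph{once and for all} in the real moment-angle complex $(D^1,S^0)^K$, a space that depends only on $K$ and not on $\underline{X}$. The null-homotopies relevant for $(C\underline{X},\underline{X})^K$ are then obtained by smashing with the identity on $\widehat{X}^I$, so the resulting comparison zigzag is tautologically natural in $\underline{X}$. This is the only point where one needs to be careful with Iriye-Kishimoto's proof, and it is a book-keeping matter rather than a genuine additional input.
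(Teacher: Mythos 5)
Your proposal is correct and matches what the paper does: the theorem is quoted from Iriye--Kishimoto with the naturality assertion extracted from their proof of Theorem 1.2, and your reconstruction — building $D(\underline{X})$ by iterated double mapping cylinders along null-homotopies chosen once and for all in $(D^1,S^0)^K$ and then smashed with $\widehat{X}^I$ — is precisely the mechanism that makes the zigzag natural in $\underline{X}$. No further comment is needed.
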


\begin{rem} The functor $D$ is complicated: an explicit description of it can be extracted from \cite{IriyeKishimoto2}, but this is unnecessary for our purposes.
\end{rem}

Combining Corollary \ref{cor: cone fibre seq splits} and Theorem \ref{thm: shifted complex homotopy type} now gives a decomposition of $\Omega (\underline{X},\underline{\ast})^K$ which is adequate for our purposes. In the case that $m=2$ and $K$ consists of the two disjoint points this is the classical description of the fibre of $X \vee Y \to X \times Y$, due to Ganea \cite{Ganea}.

We will now put the decomposition together for the polyhedral product $\Omega(C\Omega\underline{X}, \Omega\underline{X})^K$ which splits off $\Omega(\underline{X}, \underline{\ast})^K$. For the first step, we need to suspend the input spaces, but we do not yet need to loop on the outside.

\begin{lem} \label{lem: James splitting the decomp} If $K$ is a simplicial complex on $[m]$, and $I = (i_1, \dots, i_\ell) \subseteq [m]$, then there is a homotopy equivalence
$$ |\Sigma K_I| \wedge \bigvee_{(k_1, \dots,k_\ell) \in \mathbb{Z}^\ell_{\geq 1}} X_{i_1}^{\wedge k_1} \wedge \dots \wedge X_{i_\ell}^{\wedge k_\ell} \xrightarrow{\ \sim \ } |\Sigma K_I| \wedge \widehat{\Omega \Sigma X}^I,$$
which is natural in $\underline{X}$.    
\end{lem}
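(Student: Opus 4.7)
The plan is to peel off the factors $\Omega \Sigma X_{i_j}$ one at a time using the classical James splitting. Recall that for a well-pointed connected space $X$, the James model for $\Omega \Sigma X$ carries a natural filtration whose successive cofibres are $X^{\wedge k}$, and these cofibrations split after a single suspension, producing a natural equivalence
\[
\Sigma \Omega \Sigma X \simeq \bigvee_{k \geq 1} \Sigma X^{\wedge k}.
\]
Smashing with any pointed space $Z$ and sliding the suspension gives the form I will actually use:
\[
\Sigma Z \wedge \Omega \Sigma X \simeq \bigvee_{k \geq 1} \Sigma Z \wedge X^{\wedge k},
\]
where distributivity of smash over arbitrary wedges in pointed spaces is implicit.

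I would proceed by induction on $\ell = |I|$. The case $\ell = 1$ is exactly the equivalence above with $Z = |K_I|$. For the inductive step, apply the splitting to the factor $\Omega \Sigma X_{i_1}$, using that $|\Sigma K_I|$ is itself a suspension, to produce
\[
|\Sigma K_I| \wedge \widehat{\Omega \Sigma X}^I \simeq \bigvee_{k_1 \geq 1} \bigl( |\Sigma K_I| \wedge X_{i_1}^{\wedge k_1} \bigr) \wedge \bigwedge_{j=2}^{\ell} \Omega \Sigma X_{i_j}.
\]
Each summand on the right is the smash of a suspension, namely $|\Sigma K_I| \wedge X_{i_1}^{\wedge k_1}$, with $\ell - 1$ remaining factors of the form $\Omega \Sigma X_{i_j}$, so the inductive hypothesis applies to each summand separately (with $|\Sigma K_I|$ replaced by $|\Sigma K_I| \wedge X_{i_1}^{\wedge k_1}$). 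Assembling the resulting wedges via distributivity of smash over wedge collects the terms into the single wedge indexed over $(k_1, \dots, k_\ell) \in \mathbb{Z}^\ell_{\geq 1}$ appearing in the statement.

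Naturality in $\underline{X}$ is automatic, since the James splitting is natural in $X$ and distributivity of smash over wedge is a natural isomorphism. There is no substantive obstacle here; the only detail to watch is the bookkeeping of the countably infinite wedges introduced at each inductive step, which is harmless in pointed spaces because smash commutes with all colimits in each variable.
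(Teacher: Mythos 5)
Your proof is correct and is essentially the paper's argument: the paper also proves this by ``moving the suspension coordinate to each factor in turn'' and applying the natural James splitting $\Sigma\bigvee_{k\geq 1}X^{\wedge k}\xrightarrow{\sim}\Sigma\Omega\Sigma X$, which is exactly your induction on $\ell$ made explicit. The only point worth flagging is that the James splitting needs each $X_{i_j}$ connected, a hypothesis left implicit in the lemma but supplied by the surrounding context.
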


\begin{proof} Move the suspension coordinate to each factor in turn, and apply the James decomposition, see e.g.,~\cite[Proposition 4.18]{DevalapurkarHaine}, which gives a natural homotopy equivalence 
\[
\Sigma \bigvee_{k \geq 1} X^{\wedge k} \to \Sigma \Omega \Sigma X. \qedhere
\]
\end{proof}

We now have a wedge of suspensions and can use the Hilton-Milnor theorem to get a description of the loop space. We will introduce notation for the wedge summands in the previous theorem since these will be the generators of our Lie algebra.

\begin{definition} \label{def: alpha} Let $K$ be a simplicial complex on $[m]$, let $I = (i_1, \dots, i_\ell)$ be a subset of $[m]$, and let $\Vec{k} = (k_i)_{i \in I}$. Define $\alpha_{I,\Vec{k}} : \es{S}_\ast^{\times m} \to \T$ be the functor 
\[
\alpha_{I,\Vec{k}} : \underline{X} \longmapsto |K_I| \wedge \bigwedge_{i \in I} X_{i}^{ \wedge k_i}.
\]
\end{definition}

The functors $\alpha_{I, \Vec{k}}$ are indexed on the set whose elements are pairs $(I,\Vec{k})$, where $I$ is a nonempty subset of $[m]$, and $\Vec{k} \in \mathbb{Z}^I_{\geq 1}$. Note that the functor $\alpha_{I,\Vec{k}}$ depends implicitly on the simplicial complex $K$, but we omit this dependence from the notation. Lemma \ref{lem: James splitting the decomp} can now be rewritten as follows.

\begin{cor} \label{cor: decomp as alpha} If $K$ is a simplicial complex on $[m]$, and $I \subseteq [m]$, then there is a homotopy equivalence
\[
\Sigma \bigvee_{\Vec{k} \in \mathbb{Z}^I_{\geq 1}} \alpha_{I,\Vec{k}}(\underline{X}) \xrightarrow{ \ \sim \ } |\Sigma K_I| \wedge \widehat{\Omega \Sigma X}^I,
\]
which is natural in $\underline{X}$.  \qed
\end{cor}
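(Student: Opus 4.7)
The plan is to obtain the claimed equivalence as a direct cosmetic reformulation of Lemma~\ref{lem: James splitting the decomp}, so the work is essentially bookkeeping rather than any substantive new argument. The only real content is in invoking the James decomposition, which has already been done in the preceding lemma.

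Concretely, I would start from the equivalence of Lemma~\ref{lem: James splitting the decomp}:
\[
|\Sigma K_I| \wedge \bigvee_{\Vec{k} \in \mathbb{Z}^I_{\geq 1}} \bigwedge_{i \in I} X_i^{\wedge k_i} \xrightarrow{\ \sim \ } |\Sigma K_I| \wedge \widehat{\Omega \Sigma X}^I.
\]
Then I would manipulate the left-hand side using two standard identifications: first, $|\Sigma K_I| \simeq \Sigma |K_I|$ (since geometric realization commutes with simplicial suspension), and second, the fact that smash products distribute over wedges. Pulling the suspension coordinate out of the smash, this rewrites the left-hand side as
\[
\Sigma \bigvee_{\Vec{k} \in \mathbb{Z}^I_{\geq 1}} \Bigl( |K_I| \wedge \bigwedge_{i \in I} X_i^{\wedge k_i} \Bigr),
\]
and each summand is by definition $\alpha_{I,\Vec{k}}(\underline{X})$ (Definition~\ref{def: alpha}).

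For naturality, I would note that each step in the chain of identifications is natural in $\underline{X}$: the James equivalence is natural (as already used in the proof of Lemma~\ref{lem: James splitting the decomp}), while the distributivity of smash over wedge and the identification $|\Sigma K_I| \simeq \Sigma |K_I|$ are natural because they are induced by functorial operations on the input spaces that do not involve $\underline{X}$ at all. Composing with the equivalence of Lemma~\ref{lem: James splitting the decomp} then produces the desired natural equivalence.

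There is no genuine obstacle: the statement is a renaming exercise once Lemma~\ref{lem: James splitting the decomp} and Definition~\ref{def: alpha} are in place. The only thing one needs to be mildly careful with is that the indexing set $\Vec{k} \in \mathbb{Z}^I_{\geq 1}$ on the left and the $\ell$-fold smash of $\Omega \Sigma X_i$'s on the right are matched up consistently with the ordering of the elements of $I = (i_1, \dots, i_\ell)$, but this is automatic from the way both sides are constructed.
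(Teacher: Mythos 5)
Your proposal is correct and matches the paper's treatment: the paper regards this corollary as a direct rewriting of Lemma~\ref{lem: James splitting the decomp} using Definition~\ref{def: alpha} (it carries only a \qed, with the preceding text saying the lemma ``can now be rewritten as follows''), and your bookkeeping via $|\Sigma K_I| \simeq \Sigma|K_I|$ and distributivity of smash over wedge is exactly the implicit content. Nothing further is needed.
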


Looping on the outside now allows us to apply the Hilton-Milnor theorem to the decomposition obtained by combining Corollary \ref{cor: decomp as alpha} and Theorem \ref{thm: shifted complex homotopy type}. First, we note that the Hilton-Milnor theorem generalizes without additional difficulty to countable wedges, as follows. 

\subsection{The Hilton-Milnor theorem}
Let $\mathscr{I}$ be a countable set, and take the free lie algebra $L(\mathscr{I})$ to be the free Lie algebra on symbols $\{y_i\}_{i \in \mathscr{I}}$. If one gives $\mathscr{I}$ a total ordering, $L(\mathscr{I})$ is the colimit of the free Lie algebras $L(\mathscr{I}_{\leq i})$ over $i \in \mathscr{I}$ (where $\mathscr{I}_{\leq i}$ is the set of $j \leq i$ in $\mathscr{I}$).  Compatible Hall bases for the $L(\mathscr{I}_{\leq i})$ can be chosen, and then a Hall basis $\bb{L}(\mathscr{I})$ for $L(\mathscr{I})$ is obtained as the union of the Hall bases for the $L(\mathscr{I}_{\leq i})$. A word $\omega \in \bb{L}(\mathscr{I})$ has only finite length, so can contain only finitely many of the letters $y_i$. Write $a(\omega)$ for the number of letters appearing in $\omega$ ($a$ for `arguments'), and write these letters $y_{i_1} , \dots , y_{i_{a(\omega)}}$, where $i_1 < \dots < i_{a(\omega)}$ in the ordering on $\mathscr{I}$. 

\begin{definition} \label{def: omega prime} For each $\omega$, we obtain a functor $\omega : \es{S}_\ast^{\mathscr{I}} \to \T$ which acts on a countable collection of spaces $\underline{Y}=\{Y_i\}_{i \in \mathscr{I}}$ by smashing the arguments in the order given by $\omega$ (neglecting brackets). Note that this functor depends only on the finitely many $Y_i$ for which $y_i$ actually appears in the word $\omega$, so it factors over $\T^{\times a(\omega)}$. We will freely identify $\omega$ with this factorization; this amounts to being allowed to write $$\omega(Y_{i_1}, \dots , Y_{i_{a(\omega)}}) = \omega(\underline{Y}).$$
\end{definition}

With this notation we have the following lemma.

\begin{lem} \label{lem: countable HM} Let $\underline{Y}=\{Y_i\}_{i \in \mathscr{I}}$ be a countable collection of spaces. If $Y_i$ is connected for each $i$, then there exists a zigzag of homotopy equivalences
$$\Omega \sideset{}{'}\prod_{\omega \in \bb{L}(\mathscr{I})} \Sigma \omega(\underline{Y}) \simeq \Omega \Sigma \bigvee_{i \in \mathscr{I}} Y_i,$$
which is natural in $\underline{Y}$, where $\bb{L}(\mathscr{I})$ is a basis for the free Lie algebra on the collection of symbols $ \{y_i\}_{i \in \mathscr{I}}$. 
\end{lem}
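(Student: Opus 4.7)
The plan is to reduce to the classical (finite) Hilton-Milnor theorem by exhibiting both sides as filtered colimits over the poset of finite subsets $J \subseteq \mathscr{I}$. For each finite $J$, the classical Hilton-Milnor theorem supplies a natural zigzag of homotopy equivalences
\[
\Omega \sideset{}{'}\prod_{\omega \in \bb{L}(J)} \Sigma \omega(\underline{Y}) \xrightarrow{\ \sim\ } \Omega \Sigma \bigvee_{i \in J} Y_i.
\]
The compatibility of the Hall bases $\bb{L}(J) \subseteq \bb{L}(J')$ established just before the statement ensures that an inclusion $J \subseteq J'$ yields compatible inclusions of wedges and weak products, so that both sides assemble into natural transformations of diagrams indexed on the filtered poset of finite subsets of $\mathscr{I}$, with the Hilton-Milnor equivalences natural in $J$.

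Taking the filtered colimit over $J$, on the right-hand side $\Sigma$ commutes with all colimits and the colimit of $\bigvee_{i \in J} Y_i$ is $\bigvee_{i \in \mathscr{I}} Y_i$; on the left, each word $\omega \in \bb{L}(\mathscr{I})$ has finite length and so lies in $\bb{L}(J)$ for some finite $J$, and hence by the very definition of the weak product as the filtered colimit of its finite sub-products we obtain
\[
\colim_J \sideset{}{'}\prod_{\omega \in \bb{L}(J)} \Sigma \omega(\underline{Y}) \simeq \sideset{}{'}\prod_{\omega \in \bb{L}(\mathscr{I})} \Sigma \omega(\underline{Y}).
\]
Commuting $\Omega$ past these filtered colimits then yields the desired equivalence, and naturality in $\underline{Y}$ is preserved at every stage since each of the finite Hilton-Milnor equivalences is natural and filtered colimits respect natural transformations.

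The main technical obstacle is justifying that $\Omega$ commutes with the relevant filtered colimits; this is precisely where the connectedness hypothesis on the $Y_i$ is used in an essential way. I would handle this on the right by invoking the James construction model $\Omega \Sigma X \simeq \bigvee_{n \geq 1} X^{\wedge n}$ for a connected pointed space $X$, under which the colimit manipulations reduce to manifest statements about wedges and smash powers of connected spaces; and on the left by noting that the weak product of connected spaces is itself a telescope-style filtered colimit of products along based inclusions, which likewise commutes with $\Omega$. With these two models in hand, the passage to the colimit is routine and the zigzag structure is preserved.
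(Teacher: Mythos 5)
Your overall strategy is exactly the paper's: invoke the finite (natural) Hilton--Milnor theorem, use the compatibility of Hall bases to assemble the finite equivalences into a map of filtered diagrams indexed by finite subsets of $\mathscr{I}$, identify the colimit of the weak products with the weak product over $\bb{L}(\mathscr{I})$ by a cofinality argument, and pass to the colimit. That part is fine.

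The one genuine problem is your justification for commuting $\Omega$ past the filtered colimits. You assert the ``James construction model $\Omega\Sigma X \simeq \bigvee_{n\geq 1} X^{\wedge n}$,'' but this equivalence is false unstably: the James splitting gives $\Sigma\Omega\Sigma X \simeq \Sigma\bigvee_{n\geq 1} X^{\wedge n}$, i.e.\ it holds only after one suspension (for instance $\Omega S^2$ is not a wedge of spheres --- it has nontrivial Whitehead/cup-product structure). So the step as written would fail. Fortunately the detour is unnecessary: the correct and much simpler justification, which is the one the paper uses, is that $\Omega$ is a finite limit and finite limits commute with filtered colimits (of spaces, along the cofibrations implicit in the wedge and weak-product inclusions). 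With that substitution your argument goes through verbatim; the connectedness of the $Y_i$ is needed for the finite Hilton--Milnor equivalences themselves, not for the colimit manipulation.
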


\begin{proof} 
The statement for finite collections of spaces is just (a natural version of) the Hilton-Milnor theorem, see e.g.,~\cite[Proposition 4.21]{DevalapurkarHaine} or~\cite[Theorem 5.9]{Lavenir}, so what must be established is that this implies the statement for countable collections. This will essentially follow from the fact that filtered colimits commute with finite limits.

Ordering the indexing set $\mathscr{I}$ as before, the ordinary Hilton-Milnor map gives a natural homotopy equivalence
$$
\Omega \sideset{}{'}\prod_{\omega \in \bb{L}(\mathscr{I}_{\leq i})} \Sigma \omega(\underline{Y}) \xrightarrow{\ \sim \ } \Omega \Sigma \bigvee_{j \leq i} Y_i,
$$ 
for each $i$. Taking the colimit over $i$, since $\Omega$ is a finite limit and hence commutes with filtered colimits, the left-hand side becomes 
\begin{align*}
     \underset{i \in \mathscr{I}}{\colim}~\Omega \sideset{}{'}\prod_{\omega \in \bb{L}(\mathscr{I}_{\leq i})} \Sigma \omega(\underline{Y}) & \xrightarrow{\ \sim \ } \Omega~  \underset{i \in \mathscr{I}}{\colim}~ \sideset{}{'}\prod_{\omega \in \bb{L}(\mathscr{I}_{\leq i})} \Sigma \omega(\underline{Y}) \\
    & \xrightarrow{\ = \ } \Omega~  \underset{i \in \mathscr{I}}{\colim}~\underset{{\substack{S \subset  \bb{L}(\mathscr{I}_{\leq i}) \\ |S| < \infty}}}{\colim}~\prod_{\omega \in S} \Sigma \omega(\underline{Y}) \\
    & \xleftarrow{ \ \sim \ }  \Omega~\underset{{\substack{S \subset \bigcup_i \bb{L}(\mathscr{I}_{\leq i}) \\ |S| < \infty}}}{\colim}~\prod_{\omega \in S} \Sigma \omega(\underline{Y}) \\
    & \xrightarrow{ \ = \ } \Omega~ \underset{{\substack{S \subset \bb{L}(\mathscr{I}) \\ |S| < \infty}}}{\colim}~\prod_{\omega \in S} \Sigma \omega(\underline{Y}),
\end{align*}
where the third equivalence is by cofinality, and the right-hand side becomes
\[
    \underset{i \in \mathscr{I}}{\colim}~ \Omega \Sigma \bigvee_{j \leq i} Y_i \xrightarrow{ \ \sim \ }  \Omega \Sigma ~\underset{i \in \mathscr{I}}{\colim}~\bigvee_{j \leq i} Y_i = \Omega \Sigma \bigvee_{i \in \mathscr{I}} Y_i,
\]
as required.
\end{proof}

Now suppose given a countable collection of functors $F = \{F_i : \es{S}_\ast^{\times m} \to \T\}_{i \in \mathscr{I}}$. Setting $Y_i = F_i(\underline{X})$ in Lemma \ref{lem: countable HM}, we obtain the following.

\begin{cor} \label{cor: countable composed HM} Let $F= \{F_i\}_{i \in \mathscr{I}}$ be a countable collection of functors $\es{S}^{\times m}_\ast \to \T$. If $F_i(\underline{X})$ is connected for each $i$, then there exists a zigzag of homotopy equivalences
$$\Omega \sideset{}{'}\prod_{\omega \in \bb{L}(\mathscr{I})} \Sigma \omega F(\underline{X}) \simeq \Omega \Sigma \bigvee_{i \in \mathscr{I}} F_i(\underline{X}),$$
which is natural in $\underline{X}$, where $\bb{L}(\mathscr{I})$ is a basis for the free Lie algebra on symbols $\{y_i \}_{i \in \mathscr{I}}$. \qed
\end{cor}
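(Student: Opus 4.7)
The plan is to observe that this corollary is essentially a direct specialization of Lemma~\ref{lem: countable HM}, applied pointwise to the collection $\underline{Y} = \{F_i(\underline{X})\}_{i \in \mathscr{I}}$.

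First, I would fix $\underline{X} \in \es{S}_\ast^{\times m}$ and set $Y_i := F_i(\underline{X})$. Since each $Y_i$ is connected by hypothesis, Lemma~\ref{lem: countable HM} supplies a zigzag of homotopy equivalences
\[
\Omega \sideset{}{'}\prod_{\omega \in \bb{L}(\mathscr{I})} \Sigma \omega(\underline{Y}) \simeq \Omega \Sigma \bigvee_{i \in \mathscr{I}} Y_i.
\]
Unwinding Definition~\ref{def: omega prime}, $\omega(\underline{Y})$ is the iterated smash product of the $Y_i$ prescribed by the Lie word $\omega$, which under our identification $Y_i = F_i(\underline{X})$ is precisely $\omega F(\underline{X})$. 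Substituting this in gives the stated equivalence for each fixed $\underline{X}$.

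The only remaining point is naturality in $\underline{X}$. The zigzag in Lemma~\ref{lem: countable HM} is natural in $\underline{Y}$, and each $F_i$ is a functor $\es{S}_\ast^{\times m} \to \T$, so a morphism $\underline{X} \to \underline{X}'$ in $\es{S}_\ast^{\times m}$ induces a morphism $\underline{Y} \to \underline{Y}'$ in $\T^{\mathscr{I}}$ by applying $F_i$ levelwise. The naturality square for the zigzag of Lemma~\ref{lem: countable HM} with respect to this induced map is then exactly the naturality square we need in $\underline{X}$.

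No real obstacle arises: the content is entirely in Lemma~\ref{lem: countable HM}, and this corollary simply records the consequence of precomposing the equivalence with the evaluation of the $F_i$. \qed
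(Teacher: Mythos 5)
Your proof is correct and matches the paper exactly: the paper dispatches this corollary by the one-line remark "Setting $Y_i = F_i(\underline{X})$ in Lemma~\ref{lem: countable HM}, we obtain the following," which is precisely your specialization, with naturality in $\underline{X}$ following from naturality in $\underline{Y}$ composed with the functoriality of the $F_i$, just as you say.
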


Let $\alpha =\{ \alpha_{I, \Vec{k}}\}$ be the set of functors from Definition~\ref{def: alpha} indexed on the set whose elements are pairs $(I,\Vec{k})$, where $I$ is a nonempty subset of $[m]$, and $\Vec{k} \in \mathbb{Z}^I_{\geq 1}$. Recall that we call this indexing set $\mathscr{I}_m$, and write $\alpha$ for the collection $\{\alpha_{(I, \vec{k})}\}_{(I,\vec{k}) \in \mathscr{I}_m}$.

\begin{prop} \label{prop: integral decomposition of the fibre} If $K$ is a simplicial complex on $[m]$, and each $X_i$ is connected, then (with the preceding notation) there is a zigzag of homotopy equivalences
$$ \Omega \sideset{}{'}\prod_{\omega \in \bb{L}(\mathscr{I}_m)} \Sigma \omega \alpha(\underline{X}) \simeq \Omega \bigvee_{\emptyset \neq I \subseteq [m]} |\Sigma K_I| \wedge \widehat{\Omega \Sigma X}^I$$
which is natural in $\underline{X}$.
\end{prop}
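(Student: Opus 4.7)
The plan is a direct chain of natural homotopy equivalences assembling the two pieces we have already set up: the decomposition of the summands $|\Sigma K_I| \wedge \widehat{\Omega \Sigma X}^I$ coming from Corollary~\ref{cor: decomp as alpha}, and the countable Hilton-Milnor theorem in the form of Corollary~\ref{cor: countable composed HM}. The statement has been engineered so that the indexing sets match up: the collection $\alpha = \{\alpha_{(I,\Vec{k})}\}_{(I,\Vec{k}) \in \mathscr{I}_m}$ is already indexed on $\mathscr{I}_m$, which is exactly the set appearing on the left.

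Concretely, starting from the right-hand side I would first apply Corollary~\ref{cor: decomp as alpha} summand-by-summand over nonempty $I \subseteq [m]$ to produce a natural zigzag
\[
\bigvee_{\emptyset \neq I \subseteq [m]} |\Sigma K_I| \wedge \widehat{\Omega \Sigma X}^I \;\simeq\; \bigvee_{\emptyset \neq I \subseteq [m]} \Sigma \bigvee_{\Vec{k} \in \mathbb{Z}^I_{\geq 1}} \alpha_{(I,\Vec{k})}(\underline{X}).
\]
Since suspension commutes with wedges, the right-hand side identifies naturally with $\Sigma \bigvee_{(I,\Vec{k}) \in \mathscr{I}_m} \alpha_{(I,\Vec{k})}(\underline{X})$. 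Looping and then applying Corollary~\ref{cor: countable composed HM} to the collection $\alpha$ yields a natural zigzag
\[
\Omega \Sigma \bigvee_{(I,\Vec{k}) \in \mathscr{I}_m} \alpha_{(I,\Vec{k})}(\underline{X}) \;\simeq\; \Omega \sideset{}{'}\prod_{\omega \in \bb{L}(\mathscr{I}_m)} \Sigma \omega \alpha(\underline{X}).
\]
Composing these zigzags gives the desired equivalence.

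There is essentially no genuine obstacle here; the only hypothesis that needs to be checked is the connectivity assumption in Corollary~\ref{cor: countable composed HM}, namely that each $\alpha_{(I,\Vec{k})}(\underline{X})$ is path-connected. This is automatic: $\alpha_{(I,\Vec{k})}(\underline{X}) = |K_I| \wedge \bigwedge_{i \in I} X_i^{\wedge k_i}$ contains at least one smash factor $X_i$ (since $I$ is nonempty and each $k_i \geq 1$), and each such $X_i$ is connected by hypothesis, so smashing with it forces the whole smash product to be connected. Naturality of the final equivalence is inherited from the naturality of Corollaries~\ref{cor: decomp as alpha} and~\ref{cor: countable composed HM}, together with the functoriality of $\Omega$, $\Sigma$, and wedges.
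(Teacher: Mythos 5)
Your proposal is correct and follows essentially the same route as the paper's proof: apply Corollary~\ref{cor: decomp as alpha} summand-by-summand to identify the wedge with $\Sigma \bigvee_{(I,\Vec{k}) \in \mathscr{I}_m} \alpha_{I,\Vec{k}}(\underline{X})$, check that each $\alpha_{I,\Vec{k}}(\underline{X})$ is connected, and then invoke the countable Hilton--Milnor statement of Corollary~\ref{cor: countable composed HM}. The paper's version is just a more terse rendering of the same argument.
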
 
\begin{proof} By Corollary \ref{cor: decomp as alpha} we have a natural homotopy equivalence
$$\Sigma \bigvee_{\emptyset \neq I \subseteq [m]} \bigvee_{\Vec{k} \in \mathbb{Z}^I_{\geq 1}} \alpha_{I,\Vec{k}}(\underline{X}) \xrightarrow{\sim} \bigvee_{\emptyset \neq I \subseteq [m]} |\Sigma K_I| \wedge \widehat{\Omega \Sigma X}^I.$$

Since each $X_i$ is connected, the spaces $\alpha_{I,\Vec{k}}(\underline{X})$ are connected, so the result follows Corollary \ref{cor: countable composed HM}.
\end{proof}

Note in particular that $\omega \alpha$ carries $\underline{X}$ the smash product of some number of copies of each $X_i$, together with some number of copies of the fixed spaces $|K_I|$.

\section{Goodwillie approximations of polyhedral products}

\subsection{Multi-variable approximations} In this section, we study the multi-variable calculus of polyhedral products of the form $(\underline{X}, \underline{\ast})^K$. In order to use the decompositions that we have, we must introduce a single loop, and study the multi-variable calculus of the functor 
\[
\Omega(-, \underline{\ast})^K: \es{S}_\ast^{\times m} \longrightarrow \T, \ \underline{X} \longmapsto \Omega(\underline{X}, \underline{\ast})^K.
\]
Since the excisive approximations are left exact, the insertion of an additional loop coordinate doesn't introduce any technical problems. We begin with the observation that we can split off the product factor. The calculus of products is easily understood using the left exactness of the excisive approximations.

\begin{lem}\label{lem: decomposition of multi-variable excisive}
Let $K$ be a simplicial complex on $[m]$ and let $\Vec{n}$ be a multi-index. There is an equivalence
\[
P_{\Vec{n}}(\Omega(-, \underline{\ast})^K)(\underline{X}) \simeq \Omega P_{\Vec{n}}((C\Omega(-), \Omega(-))^K)(\underline{X}) \times \Omega P_{\Vec{n}}(\textstyle\prod_{i=1}^m(-))(\underline{X}).
\]
natural in $\underline{X}$.
\end{lem}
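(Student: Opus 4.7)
The plan is to reduce the lemma to the left exactness of $P_{\Vec{n}}$, which was recorded in Theorem~\ref{thm: existence multi-variable excisive}, by combining it with the natural splitting provided by Corollary~\ref{cor: cone fibre seq splits}.

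First I would apply Corollary~\ref{cor: cone fibre seq splits} to rewrite the input functor. That corollary provides a natural (in $\underline{X}$) equivalence of functors
\[
\Omega(-,\underline{\ast})^K \;\simeq\; \Omega(C\Omega(-), \Omega(-))^K \,\times\, \prod_{i=1}^m \Omega(-).
\]
Because $P_{\Vec{n}}$ sends natural equivalences to natural equivalences, applying it to both sides reduces the claim to showing that $P_{\Vec{n}}$ of the product on the right-hand side splits as a product of $P_{\Vec{n}}$'s.

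Next I would invoke the fact, from Theorem~\ref{thm: existence multi-variable excisive}, that $P_{\Vec{n}}$ is a left adjoint that is moreover \emph{left exact}; in particular it preserves finite products. This yields a natural equivalence
\[
P_{\Vec{n}}\bigl(\Omega(C\Omega(-), \Omega(-))^K \times \textstyle\prod_{i=1}^m \Omega(-)\bigr)(\underline{X}) \simeq P_{\Vec{n}}\bigl(\Omega(C\Omega(-), \Omega(-))^K\bigr)(\underline{X}) \times P_{\Vec{n}}\bigl(\textstyle\prod_{i=1}^m \Omega(-)\bigr)(\underline{X}).
\]
Using left exactness again, the outer $\Omega$ commutes past $P_{\Vec{n}}$ in each factor (the loop space is a finite limit), so the first factor becomes $\Omega P_{\Vec{n}}((C\Omega(-), \Omega(-))^K)(\underline{X})$. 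For the second factor, the same argument together with the fact that $\Omega$ commutes with finite products (it is a right adjoint) gives
\[
P_{\Vec{n}}\bigl(\textstyle\prod_{i=1}^m \Omega(-)\bigr)(\underline{X}) \simeq \Omega P_{\Vec{n}}\bigl(\textstyle\prod_{i=1}^m (-)\bigr)(\underline{X}).
\]
Assembling these equivalences produces the asserted splitting.

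There is no real obstacle here: the lemma is essentially bookkeeping, and the only substantive input is the left exactness of the multi-variable excisive approximation functor together with the naturality built into Corollary~\ref{cor: cone fibre seq splits}. One minor point to be careful about is that all equivalences above are natural in $\underline{X}$, which is automatic since both the splitting of Corollary~\ref{cor: cone fibre seq splits} and the functorial construction of $P_{\Vec{n}}$ are natural.
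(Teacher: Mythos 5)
Your proof is correct and follows essentially the same route as the paper, which simply applies the left exactness of $P_{\Vec{n}}$ to the natural splitting of Corollary~\ref{cor: cone fibre seq splits}; your version just spells out the bookkeeping (commuting $P_{\Vec{n}}$ past the product and past $\Omega$) that the paper leaves implicit.
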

\begin{proof}
Apply the fact that excisive approximations are left exact hence preserve split fibre sequences to the natural splitting of Corollary~\ref{cor: cone fibre seq splits}. 
\end{proof}

It follows that to understand the multi-variable calculus of polyhedral products of the form $(\underline{X}, \underline{\ast})^K$ we need a understanding of the multi-variable calculus of the product $\prod_{i=1}^m X_i$, and of the polyhedral product of the form $(C\Omega\underline{X}, \Omega\underline{X})^K$. We now treat each piece of this loop space decomposition on the level of calculus separately. 

\begin{lem}\label{lem: multi-variable approx to product}
Let $\Vec{n} = (n_1, \dots, n_m)$ be a multi-index. There is an equivalence, 
\[
P_{\Vec{n}}(\prod_{i=1}^m )(\underline{X}) \simeq \prod_{i=1}^m P_{n_i}(\id)(X_i),
\]
natural in $\underline{X}$.
\end{lem}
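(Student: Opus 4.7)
The plan is to use the explicit construction of $P_{\Vec{n}}$ given in the preceding remark as a filtered colimit of iterates of the functor $T_{\Vec{n}}$, together with the fact that both the $S$-pointed cone and the indexing cube for $T_{\Vec{n}}$ factor coordinate-wise across a multi-variable product. This reduces the lemma to a purely formal commutation of finite limits, finite products, and filtered colimits.

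First I would observe that for the product functor $\prod_{i=1}^m : \es{S}_\ast^{\times m} \to \T$ one has
\[
T_{\Vec{n}}\bigl(\textstyle\prod_{i=1}^m\bigr)(\underline{X}) = \lim_{(S_i) \in \prod_{i=1}^m \es{P}_0(n_i+1)} \prod_{i=1}^m C_{S_i}(X_i).
\]
The essential observation is that the integrand is a product of factors each depending on only one of the indices $S_i$, so a Fubini-style argument for limits over a product of indexing categories swaps the outer limit with the outer product and yields
\[
\prod_{i=1}^m \lim_{S_i \in \es{P}_0(n_i+1)} C_{S_i}(X_i) = \prod_{i=1}^m T_{n_i}(\id)(X_i).
\]

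Iterating this identity gives $T_{\Vec{n}}^q(\prod_i)(\underline{X}) \simeq \prod_i T_{n_i}^q(\id)(X_i)$ for every $q$. Taking the filtered colimit over $q$ and using that finite products commute with filtered colimits in any differential $\infty$-category passes the colimit through the product and delivers the claimed equivalence. Naturality in $\underline{X}$ is inherited step by step from the naturality of the pointed cone and the universal property of limits.

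There is no substantive obstacle; the only thing that requires a moment of care is the coordinate-wise Fubini exchange, which follows formally from the universal property of the product indexing category $\prod_{i=1}^m \es{P}_0(n_i+1)$. As an optional sanity check one could also verify the statement directly via the universal property: the right-hand side is $\Vec{n}$-excisive (each factor is $n_i$-excisive in the $i$-th variable and constant, hence $n_j$-excisive, in every other variable, and $\Vec{n}$-excisiveness is preserved by finite limits), and the canonical map from $\prod_i$ exhibits it as initial among such approximations. I expect the $T_{\Vec{n}}$ route to be cleanest in a written-out proof.
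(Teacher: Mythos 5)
Your argument is correct, but it runs at a different level than the paper's. The paper's proof is a two-line appeal to the universal property: $P_{\Vec{n}}$ is a left exact left adjoint (Theorem \ref{thm: existence multi-variable excisive}), hence preserves the finite product $\prod_{i=1}^m(-) \simeq \prod_{i=1}^m \id\circ\,\sf{pr}_i$, and then $P_{\Vec{n}}(\id\circ\,\sf{pr}_i)\simeq P_{n_i}(\id)\circ\sf{pr}_i$ because $\sf{pr}_i$ is constant (hence already excisive) in the variables $j\neq i$. You instead unwind the explicit $T_{\Vec{n}}$ construction and re-derive the product preservation by hand via a Fubini exchange; this is a fine, more self-contained route, and it makes visible exactly where the single-variable $T_{n_i}$ comes from, at the cost of having to track the iteration over $q$ and the commutation of finite products with filtered colimits explicitly. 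The one step you wave at — that $\lim_{(S_j)_j} C_{S_i}(X_i)$ over the full product poset collapses to $\lim_{S_i\in\es{P}_0(n_i+1)} C_{S_i}(X_i)$ — deserves its one-line justification: the complementary factor $\prod_{j\neq i}\es{P}_0(n_j+1)$ has a terminal object (each $\es{P}_0(n_j+1)$ contains the full subset $[n_j+1]$), so it is weakly contractible and the limit of a constant diagram over it is the constant value. With that noted, both your main route and your ``sanity check'' via excisiveness of the right-hand side are valid proofs.
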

\begin{proof}
The universal $\Vec{n}$-excisive approximation preserves products, so there is a natural equivalence
\[
P_{\Vec{n}}(\prod_{i=1}^m) \simeq \prod_{i=1}^m P_{\Vec{n}}(\id \circ~\sf{pr}_i),
\]
and the result follows since the projection map $\sf{pr}_i$ preserves colimits, and $P_{\Vec{n}}$ is given by preforming the universal $n_i$-excisive approximation in the $i$-th variable.
\end{proof}

For the factor $(C\Omega\underline{X}, \Omega\underline{X})^K$, we additionally need to suspend on the inside. Recall that $\alpha = \{\alpha_{I, \Vec{k}}\}$ is the set of functors from Definition~\ref{def: alpha} indexed on the set $\mathscr{I}_m$ of pairs $(I,\Vec{k})$, where $I$ is a non-empty subset of $[m]$ and $\vec{k} \in \mathbb{Z}^I_{\geq 1}$. For any $F: \es{S}_\ast^{\times m} \to \T$, there is an equivalence $P_{\Vec{n}}(\Omega F \Sigma) \simeq \Omega P_{\Vec{n}}(F) \Sigma$, so the left-hand term in the equivalence of the theorem below is also a description of the multi-variable calculus of the functor that sends $\underline{X}$ to $\Omega(C\Omega\Sigma\underline{X},\Omega\Sigma\underline{X})^K$, hence in particular of one factor of $\Omega(\underline{\Sigma X},\underline{\ast})^K$ (c.f. Corollary \ref{cor: cone fibre seq splits}).

\begin{thm} \label{thm: multivariable calc of fibre} Let $K$ be a simplicial complex on $[m]$ for which the fat wedge filtration of the real moment-angle complex on $K$ is trivial, and let $\underline{X} \in \es{S}^{\times m}_\ast$ with each $X_i$ connected. Let $\Vec{n} = (n_1, \dots n_m)$ be a multi-index. There is a natural equivalence
\[
\Omega P_{\Vec{n}} (C\Omega(-), \Omega(-))^K (\Sigma(\underline{X})) \simeq \Omega \sideset{}{'}\prod_{\omega \in \bb{L}(\mathscr{I}_m)} P_\kappa(\id)(\Sigma \omega \alpha(\underline{X})),
\]
where $\kappa = \min_i(\lfloor \frac{n_i}{a_i} \rfloor)$ for $a_i$ the smash power of $X_i$ appearing in the word $\omega\alpha(\underline{X})$.
\end{thm}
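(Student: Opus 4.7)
The strategy is to apply $P_{\vec{n}}$ to the integral loop-space decomposition of the polyhedral product and then identify each resulting factor via a chain-rule-type computation for smash powers.

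First, combining Theorem~\ref{thm: shifted complex homotopy type} applied to $\Omega\Sigma\underline{X}$ with Proposition~\ref{prop: integral decomposition of the fibre} produces a natural zigzag of equivalences
\[
\Omega(C\Omega\Sigma\underline{X}, \Omega\Sigma\underline{X})^K \;\simeq\; \Omega \sideset{}{'}\prod_{\omega \in \bb{L}(\mathscr{I}_m)} \Sigma\omega\alpha(\underline{X}).
\]
Because the $S$-pointed cone $C_S$ commutes with $\Sigma$ in each variable, we have $P_{\vec{n}}(F)(\Sigma\underline{X}) \simeq P_{\vec{n}}(F \circ \Sigma)(\underline{X})$ for any multi-variable $F$, so the left-hand side of the theorem is computed by applying $P_{\vec{n}}$ to the displayed equivalence. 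Next, $P_{\vec{n}}$ is left exact, hence commutes with $\Omega$ and with finite products, and is constructed as a filtered colimit of finite limits in the differential $\infty$-category $\T$, hence commutes with filtered colimits and therefore with weak products. Pushing $P_{\vec{n}}$ inside the loop and weak product thus reduces the theorem to the identification
\[
P_{\vec{n}}(\Sigma\omega\alpha)(\underline{X}) \;\simeq\; P_\kappa(\id)(\Sigma\omega\alpha(\underline{X}))
\]
for each Lie word $\omega$.

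The key step, and the main obstacle, is this identification. Unfolding $\Sigma\omega\alpha(\underline{X}) = \Sigma W_\omega \wedge \bigwedge_i X_i^{\wedge a_i}$ where $W_\omega = \bigwedge_j |K_{I_j}|$ is fixed, I would argue iteratively in each coordinate. In the $i$-th variable, smashing with the $X_i$-fixed factor $\Sigma W_\omega \wedge \bigwedge_{j \neq i} X_j^{\wedge a_j}$ is colimit-preserving, so the formal identity $P_n(G \circ H) \simeq P_n(G) \circ H$ for colimit-preserving $H$ (a direct consequence of $H$ commuting with the cone construction used to build $T_n$) transfers $P_{n_i}$ onto the composition of the identity with $X_i \mapsto X_i^{\wedge a_i}$. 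A single-variable chain-rule-type argument for the $a_i$-fold smash power then rescales the excisive degree from $n_i$ to $\lfloor n_i / a_i \rfloor$. Iterating across all $m$ variables, and collapsing nested approximations via the idempotency $P_k P_\ell \simeq P_{\min(k,\ell)}$, the excisive indices combine to give $\kappa = \min_i \lfloor n_i / a_i \rfloor$. The technical heart of the argument is the smash-power rescaling lemma; once that is available, the rest is formal manipulation with the decomposition and the exactness properties of $P_{\vec{n}}$.
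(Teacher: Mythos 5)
Your argument is essentially the paper's: both apply $P_{\vec{n}}$ to the decomposition obtained from Theorem~\ref{thm: shifted complex homotopy type} and Proposition~\ref{prop: integral decomposition of the fibre}, commute $P_{\vec{n}}$ past $\Omega$, $\Sigma$ and the weak product using that it is a left exact left adjoint preserving filtered colimits, and then identify $P_{\vec{n}}[\Sigma\omega\alpha](\underline{X})$ with $P_\kappa(\id)(\Sigma\omega\alpha(\underline{X}))$. The paper discharges your ``smash-power rescaling lemma'' by citing \cite[Lemma 2.4]{BrantnerHeuts} (after Arone--Kankaanrinta \cite{AroneKankaanrinta}); note only that your intermediate appeal to $P_n(G\circ H)\simeq P_n(G)\circ H$ for colimit-preserving $H$ requires $H$ to be \emph{precomposed}, so it does not directly handle smashing with the fixed factor on the outside, whereas the cited lemma treats the whole precomposition $\id\circ\bigl(\Sigma W_\omega\wedge\textstyle\bigwedge_i(-)^{\wedge a_i}\bigr)$ at once.
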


\begin{proof} We will apply calculus to the integral decomposition obtained by combining Theorem \ref{thm: shifted complex homotopy type} and Proposition \ref{prop: integral decomposition of the fibre}. We write:
    \begin{align*}
        \Omega P_{\Vec{n}} (C\Omega(-), \Omega(-))^K (\Sigma(\underline{X})) & \simeq P_{\Vec{n}}[ \Omega (C\Omega(-), \Omega(-))^K \Sigma ](\underline{X})\\
        & \simeq P_{\Vec{n}} [\Omega \sideset{}{'}\prod_{\omega \in \bb{L}(\mathscr{I}_m)} \Sigma \omega \alpha](\underline{X}) \\
        & \simeq \Omega \sideset{}{'}\prod_{\omega \in \bb{L}(\mathscr{I}_m)} P_{\Vec{n}} [  \Sigma \omega \alpha](\underline{X}),
    \end{align*} where for the last equivalence we use that $P_{\Vec{n}}$ commutes with finite limits and filtered colimits as a left exact left adjoint. Recalling that $\omega \alpha = \omega(\alpha_{I_1,\Vec{k}_1}, \dots ,\alpha_{I_{a(\omega)},\Vec{k}_{a(\omega)}})$ is just a smash power, the result now follows from \cite[Lemma 2.4]{BrantnerHeuts} (originally due to Arone-Kankaanrinta \cite[Proof of Lemma 1.4]{AroneKankaanrinta}).
\end{proof}

In particular, note that $\kappa$ is implicitly a function of $\omega$ and $\alpha_{I_1, \vec{k}_1}, \dots, \alpha_{I_{a(\omega)}, \vec{k}_{a(\omega)}}$ (c.f. Definition \ref{def: omega prime}), so it is takes different values in different factors of the weak product. We leave this dependency implicit because the notation is already overburdened.

A statement of Theorem \ref{thm: multivariable calc of fibre} where the effect of the complex $K$ is more evident is as follows.

\begin{definition} \label{def: IK}
    Let $\mathscr{I}_K$ be the subset of $\mathscr{I}_m$ indexed on those $(I,\Vec{k})$ for which $I \not \in K$; precisely, $\mathscr{I}_K$ is the set of pairs $(I , \vec{k})$ where $I$ is a nonempty subset of $[m]$ not contained in $K$ and $\vec{k} \in \mathbb{Z}^I_{\geq 1}$.
\end{definition}

\begin{cor} \label{cor: multivariable calc of fibre, restricted indexing} Let $K$ be a simplicial complex on $[m]$ for which the fat wedge filtration of the real moment-angle complex on $K$ is trivial, and let $\underline{X} \in \es{S}^{\times m}_\ast$ with each $X_i$ connected. Let $\Vec{n} = (n_1, \dots n_m)$ be a multi-index. There is an equivalence
$$\Omega P_{\Vec{n}} (C\Omega(-), \Omega(-))^K (\Sigma(\underline{X})) \simeq \Omega \sideset{}{'}\prod_{\omega \in \bb{L}(\mathscr{I}_K)} P_\kappa(\id)(\Sigma \omega \alpha)(\underline{X})),$$
where $\kappa = \min_i(\lfloor \frac{n_i}{a_i} \rfloor)$ for $a_i$ the smash power of $X_i$ appearing in the word $\omega \alpha (\underline{X})$. 
\end{cor}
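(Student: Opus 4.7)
The plan is to deduce this from Theorem~\ref{thm: multivariable calc of fibre} by showing that the factors in the weak product indexed by words involving the ``bad'' letters $y_{(I,\vec{k})}$ with $I \in K$ are already contractible, so that they may be safely discarded from the weak product.

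The first step is the key geometric observation: if $I \subseteq [m]$ is a simplex of $K$, then every subset of $I$ is also a simplex (since $K$ is closed under taking subsets), so the full subcomplex $K_I$ is the full simplex on vertex set $I$. Its geometric realization $|K_I|$ is then contractible, and therefore the space
\[
\alpha_{I,\vec{k}}(\underline{X}) = |K_I| \wedge \bigwedge_{i \in I} X_i^{\wedge k_i}
\]
is contractible for every $\underline{X}$.

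Second, I would choose the Hall basis for $L(\mathscr{I}_m)$ compatibly with the inclusion $\mathscr{I}_K \hookrightarrow \mathscr{I}_m$, so that the words of $\bb{L}(\mathscr{I}_K)$ appear as a subset of $\bb{L}(\mathscr{I}_m)$, namely exactly the words whose letters all lie in $\mathscr{I}_K$. This is possible by the standard construction of Hall bases (pick the ordering on $\mathscr{I}_m$ to put the elements of $\mathscr{I}_K$ first, or more simply, build a Hall basis of $L(\mathscr{I}_K)$ and then extend). Any word $\omega \in \bb{L}(\mathscr{I}_m) \setminus \bb{L}(\mathscr{I}_K)$ then contains at least one letter $y_{(I,\vec{k})}$ with $I \in K$, so the associated functor $\omega \alpha$ is the smash product of a contractible space with other factors, hence itself contractible.

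Finally, I would use that $P_\kappa(\id)$ is a reduced functor, so $P_\kappa(\id)(\Sigma \omega\alpha(\underline{X})) \simeq \ast$ for each such $\omega$. Hence these factors contribute trivially to the weak product in Theorem~\ref{thm: multivariable calc of fibre}, and we may restrict the indexing set from $\bb{L}(\mathscr{I}_m)$ down to $\bb{L}(\mathscr{I}_K)$ without changing the homotopy type. The only mild subtlety — and essentially the only nontrivial step — is being careful with the compatibility of Hall bases under the inclusion $\mathscr{I}_K \subseteq \mathscr{I}_m$; everything else is a direct consequence of the contractibility of full simplices.
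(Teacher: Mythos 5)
Your argument is correct and is essentially the paper's own proof: the key point in both is that $I \in K$ forces $K_I$ to be a full simplex, so $|K_I| \simeq \ast$ and every $\alpha_{I,\vec{k}}$ (hence every word containing such a letter) is canonically null and drops out of the weak product. Your extra care about choosing the Hall basis of $L(\mathscr{I}_m)$ to extend one for $L(\mathscr{I}_K)$ is a detail the paper leaves implicit, but it is handled exactly as you suggest and does not change the approach.
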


\begin{proof} If $I \in K$, then $K_I \simeq *$, so for each $\vec{k}$, the functor $\alpha_{I,\Vec{k}}$ is canonically null. It follows that  if some $I_j \in K$ then the functor $\omega \alpha = \omega(\alpha_{I_1,\Vec{k}_1}, \dots ,\alpha_{I_{a(\omega)},\Vec{k}_{a(\omega)}})$ is canonically null, and these don't contribute to the weak product in Theorem \ref{thm: multivariable calc of fibre}. The result follows.
\end{proof}

The functor $P_0(\id)$ is the constant functor $X \mapsto *$, so we can now make the following observation.

\begin{cor}\label{cor: weak product is product after Pn}
    In Theorem \ref{thm: multivariable calc of fibre} and Corollary \ref{cor: multivariable calc of fibre, restricted indexing}, the index $\kappa$ can be nonzero for at most finitely many terms of the weak product, so all but finitely many terms are contractible, and the weak product may be replaced with the cartesian product.
\end{cor}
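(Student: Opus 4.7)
The plan is to show that the condition $\kappa \neq 0$ puts a combinatorial bound on the words $\omega$ which can contribute nontrivially to the weak product, and then observe that the remaining factors are canonically contractible.

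First, I would unpack the meaning of $\kappa \neq 0$. By the formula $\kappa = \min_i \lfloor n_i/a_i \rfloor$, where $a_i$ is the smash power of $X_i$ in $\omega\alpha(\underline{X})$, we need $a_i \leq n_i$ for every index $i$ with $a_i > 0$ (the remaining $i$ do not contribute to the minimum). In particular, $\sum_{i=1}^m a_i \leq \sum_{i=1}^m n_i =: N$, a fixed finite bound depending only on $\Vec{n}$.

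Next, I would translate this bound into a bound on the word $\omega$. If $\omega$ has letters $\alpha_{I_1,\Vec{k}_1},\dots,\alpha_{I_{a(\omega)},\Vec{k}_{a(\omega)}}$, then from the definition of $\alpha_{I,\vec{k}}$ each letter contributes $\sum_{i \in I_j}(k_j)_i \geq |I_j|\geq 1$ to the total smash power $\sum_i a_i$. Hence the length $a(\omega)$ is at most $N$. Moreover, each individual pair $(I_j,\Vec{k}_j)$ satisfies $\sum_{i \in I_j}(k_j)_i \leq N$, and since $I_j \subseteq [m]$, only finitely many such pairs exist in $\mathscr{I}_m$ (or a fortiori in $\mathscr{I}_K$). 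Hall words of bounded length drawn from a finite alphabet form a finite set, so there are only finitely many $\omega$ for which $\kappa$ can be nonzero.

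For every other $\omega$, $\kappa = 0$, and since $P_0(\id)$ is naturally equivalent to the constant functor at the basepoint, the corresponding factor in the weak product is contractible. The weak product (colimit of finite products) of a family in which all but finitely many factors are contractible is weakly equivalent to the finite cartesian product of the remaining factors, which gives the claim. The only mild subtlety is to ensure that the minimum defining $\kappa$ is taken only over indices where $a_i > 0$, but this is already part of the setup of Theorem \ref{thm: multivariable calc of fibre}.
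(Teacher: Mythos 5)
Your proof is correct and follows essentially the same approach as the paper: bound the words $\omega$ with $\kappa\neq 0$ combinatorially via the smash powers $a_i$, conclude there are only finitely many such Hall words, and use that $P_0(\id)$ is the constant functor at the basepoint. Your aggregate bound $\sum_i a_i \le \sum_i n_i$ packages the paper's two-step argument (finitely many ``potentially nonzero'' pairs $(I,\vec{k})$, plus a bound on the multiplicity of each letter in $\omega$) a little more directly, but the underlying idea is identical.
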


\begin{proof} Fix $\vec{n}$. We will first argue that there are only finitely many $\alpha_{I,\vec{k}}$ which can appear as arguments in words $\omega$ for which $\kappa > 0$. Write $b(I,\vec{k})_i$ for the smash power of $X_i$ appearing in $\alpha_{I,\vec{k}}$ (this is either zero or an entry of $\vec{k}$). There are only finitely many subsets $I$ of $[m]$, and for each $I$ there are only finitely many vectors $\vec{k}$ such that $\lfloor \frac{n_i}{b(I,\vec{k})_i} \rfloor$ is positive for every $i$. Call a pair $(I,\vec{k})$ \emph{potentially nonzero} if $\lfloor \frac{n_i}{b(I,\vec{k})_i} \rfloor$ is positive for every $i$; by the preceding considerations there are only finitely many potentially nonzero pairs. The smash power $a_i$ of $X_i$ appearing in $\omega \alpha = \omega(\alpha_{I_1,\Vec{k}_1}, \dots ,\alpha_{I_{a(\omega)},\Vec{k}_{a(\omega)}})$ is  $\sum_{j=1}^{a(\omega)}c_\omega(I_j,\vec{k}_j) b(I_j,\vec{k}_j)_i$, where $c_\omega(I_j,\vec{k}_j)$ is the number of instances of $\alpha_{I_j,\vec{k}_j}$ in the lie word $\omega$. It follows that 
\begin{enumerate}
    \item $\kappa$ can only be nonzero for words $\omega(\alpha_{I_1,\Vec{k}_1}, \dots ,\alpha_{I_{a(\omega)},\Vec{k}_{a(\omega)}})$ involving only arguments $\alpha_{I,\Vec{k}}$ for which $(I, \vec{k})$ is potentially nonzero, and
    \item for words involving only arguments $\alpha_{I,\Vec{k}}$ indexed by potentially nonzero pairs, $\kappa$ can only be nonzero if $c_\omega(I,\vec{k})$ is small enough (precisely, less than some $d(I,\vec{k})$ depending only on $(I,\vec{k})$).
\end{enumerate}

In other words, we have established that the factors for which $\kappa$ can be nonzero are indexed by a subset of the Hall basis of the free lie algebra on the finitely many possibly nonzero pairs, precisely, the subset of Hall words where each letter appears at most some finitely many times, depending on that basis element. The result then follows from the fact that a free lie algebra on finitely many letters contains only finitely many words of given word length.
\end{proof}

Combining Theorem~\ref{thm: multivariable calc of fibre}, Lemma~\ref{lem: multi-variable approx to product} and Lemma~\ref{lem: decomposition of multi-variable excisive} provides a proof of Theorem~\ref{Thm: multivariable decomp}.

\subsection{Single-variable approximations} We now briefly summarise the corresponding single-variable situation by precomposing with the diagonal functor $\Delta: \T \to \es{S}_\ast^{\times m}$. We state and prove the version of the theorem with the restricted indexing of Corollary \ref{cor: multivariable calc of fibre, restricted indexing} (c.f. Definition \ref{def: IK}) - the unrestricted version is equally true.

\begin{thm} \label{thm: single-variable calc of fibre, restricted indexing} Let $K$ be a simplicial complex on $[m]$ for which the fat wedge filtration of the real moment-angle complex on $K$ is trivial, and let $X \in \T$ be connected. Let $n \in \mathbb{Z}_{\geq 1}$. There is an equivalence
$$\Omega P_{n} (C\Omega(-), \Omega(-))^K (\Delta(\Sigma X)) \simeq \Omega \sideset{}{'}\prod_{\omega \in \bb{L}(\mathscr{I}_{K})} P_\kappa(\id)(\Sigma \omega \alpha \Delta(X)),$$
where $\kappa = \lfloor \frac{n}{a} \rfloor$ for $a = \Sigma_i a_i$, where $a_i$ is the smash power of the variable $X_i$ appearing in the word $\omega \alpha (\underline{X})$. 
\end{thm}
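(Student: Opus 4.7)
The plan is to mimic the proof of Theorem \ref{thm: multivariable calc of fibre} with the single-variable approximation $P_n$ in place of $P_{\Vec{n}}$ and with the diagonal $\Delta: \T \to \es{S}_\ast^{\times m}$ inserted throughout. First, precompose the natural equivalence
\[
\Omega (C\Omega(-), \Omega(-))^K (\Sigma \underline{X}) \simeq \Omega \sideset{}{'}\prod_{\omega \in \bb{L}(\mathscr{I}_m)} \Sigma \omega \alpha(\underline{X}),
\]
obtained by combining Corollary \ref{cor: cone fibre seq splits}, Theorem \ref{thm: shifted complex homotopy type}, and Proposition \ref{prop: integral decomposition of the fibre}, with the diagonal. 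Then apply $P_n$ to both sides. Since $P_n$ is a left exact left adjoint, it commutes with the outer $\Omega$ and with the weak product (a filtered colimit of finite products), yielding
\[
\Omega P_n(C\Omega(-), \Omega(-))^K (\Delta \Sigma X) \simeq \Omega \sideset{}{'}\prod_{\omega \in \bb{L}(\mathscr{I}_m)} P_n(\Sigma \omega \alpha \Delta)(X).
\]

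For each Lie word $\omega$, unwinding Definition \ref{def: alpha} and Definition \ref{def: omega prime} shows that the functor $X \mapsto \omega \alpha \Delta(X)$ has the form $C_\omega \wedge X^{\wedge a}$, where $C_\omega$ is a fixed space obtained by smashing together the realizations $|K_{I_j}|$ coming from the arguments of $\omega$, and $a = \sum_i a_i$ is the total smash power of $X$. The sum arises because precomposition with $\Delta$ identifies the $m$ separate variables $X_i$ with the single variable $X$, so their individual smash powers $a_i$ add. Applying \cite[Lemma 2.4]{BrantnerHeuts} (originally Arone-Kankaanrinta \cite[Proof of Lemma 1.4]{AroneKankaanrinta}) to $\Sigma \omega \alpha \Delta$, viewed as a smash power functor tensored with a fixed space and suspended, then gives $P_n(\Sigma \omega \alpha \Delta)(X) \simeq P_{\lfloor n/a \rfloor}(\id)(\Sigma \omega \alpha \Delta(X))$, so $\kappa = \lfloor n/a \rfloor$ as claimed. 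Finally, restrict the indexing from $\mathscr{I}_m$ to $\mathscr{I}_K$ exactly as in the proof of Corollary \ref{cor: multivariable calc of fibre, restricted indexing}: if some $I_j \in K$ then $K_{I_j}$ is contractible, the functor $\alpha_{I_j, \vec{k}_j}$ is canonically null, and therefore the corresponding factor in the weak product is contractible.

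There is no real obstacle here beyond the bookkeeping; the one point requiring care is the identification of the single-variable $\kappa = \lfloor n/a \rfloor$ with $a = \sum_i a_i$ as the replacement for the multi-variable $\kappa = \min_i (\lfloor n_i/a_i \rfloor)$. This reflects the general principle that precomposing a multi-homogeneous functor with the diagonal converts the multi-degree in each variable into the total degree, i.e., the degree of homogeneity summed over all variables.
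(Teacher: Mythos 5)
Your proposal is correct and follows essentially the same route as the paper: both precompose the integral decomposition with the diagonal, commute $P_n$ past the outer loop and the weak product using left exactness, recognize $\omega\alpha\Delta$ as a fixed space smashed with $X^{\wedge a}$ where $a=\sum_i a_i$, and invoke the one-variable case of Brantner--Heuts Lemma 2.4, restricting the indexing to $\mathscr{I}_K$ as in Corollary \ref{cor: multivariable calc of fibre, restricted indexing}. The paper's proof is just a terser version of yours, deferring to the proof of Theorem \ref{thm: multivariable calc of fibre} for the bookkeeping you spell out.
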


\begin{proof} We proceed as in the proof of Theorem \ref{thm: multivariable calc of fibre}, obtaining:
\begin{align*}
        \Omega P_{n} (C\Omega(-), \Omega(-))^K (\Delta(\Sigma X)) & \simeq \Omega \sideset{}{'}\prod_{\omega \in \bb{L}(\mathscr{I}_K)} P_{n} [  \Sigma \omega \alpha ](\Delta(X)).
    \end{align*}
Once again, we recall that $\omega \alpha \circ \Delta$ just forms a smash power of $X$, together with some number of copies of the complexes $K_{I_j}$. The result then follows by (the one-variable special case of) \cite[Lemma 2.4]{BrantnerHeuts}.
\end{proof}

The proofs of Lemma~\ref{lem: multi-variable approx to product} and Lemma~\ref{lem: decomposition of multi-variable excisive} extend as one would suspect to the single-variable calculus, and in combination with Theorem~\ref{thm: single-variable calc of fibre, restricted indexing}, proves a single-variable variant of Theorem~\ref{Thm: multivariable decomp} after precomposition with the diagonal.

As in the multivariable case, we have the following corollary for polyhedral products on $(\underline{\Sigma X}, \underline{*})$ (where all spaces $X_i$ must be equal):

\begin{cor} Let $K$ be a simplicial complex on $[m]$ for which the fat wedge filtration of the real moment-angle complex on $K$ is trivial, and let $X \in \T$ be connected. There is an equivalence
\[
\Omega P_{n}((\Sigma(-), \underline{\ast})^K \circ \Delta )(\underline{X}) \simeq \Omega \sideset{}{'}\prod_{\omega \in \bb{L}(\mathscr{I}_K)} P_\kappa(\id)(\Sigma \omega \alpha \Delta (X))\times \Omega \prod_{i=1}^m P_{n}(\id)(\Sigma X),
\]
where $\kappa = \lfloor \frac{n}{a} \rfloor$ for $a = \Sigma_i a_i$, where $a_i$ is the smash power of the variable $X_i$ appearing in the word $\omega \alpha (\underline{X})$. \qed \end{cor}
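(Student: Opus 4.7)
The plan is to assemble the desired equivalence directly from pieces already established, in close parallel to how Theorem~\ref{Thm: multivariable decomp} was deduced by combining Theorem~\ref{thm: multivariable calc of fibre}, Lemma~\ref{lem: multi-variable approx to product} and Lemma~\ref{lem: decomposition of multi-variable excisive}, but now precomposing throughout with the diagonal $\Delta : \T \to \es{S}_\ast^{\times m}$.

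First, I would precompose the natural splitting of Corollary~\ref{cor: cone fibre seq splits} with $\Sigma \circ \Delta$ in the inputs to obtain a natural equivalence
\[
\Omega(\Sigma(-), \underline{\ast})^K \circ \Delta \ \simeq \ \bigl[\Omega(C\Omega\Sigma(-), \Omega\Sigma(-))^K \circ \Delta\bigr] \times \bigl[\textstyle\prod_{i=1}^m \Omega\Sigma(-)\bigr].
\]
Applying the single-variable universal $n$-excisive approximation $P_n$ and using that $P_n$ is a left exact left adjoint (hence commutes with products), this splits on calculus into a product of the two $P_n$'s of the factors, analogous to Lemma~\ref{lem: decomposition of multi-variable excisive}.

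Next, I would handle each factor separately. For the first factor, $\Omega$ commutes with $P_n$ and I can apply Theorem~\ref{thm: single-variable calc of fibre, restricted indexing} verbatim, which already gives the weak product indexed by $\bb{L}(\mathscr{I}_K)$ with the correct $\kappa$. For the second factor, I would establish the single-variable analogue of Lemma~\ref{lem: multi-variable approx to product}: since $P_n$ preserves finite products, $P_n[\prod_i \Omega\Sigma(-)]$ splits as $\prod_i P_n[\Omega\Sigma \circ \mrm{pr}_i \circ \Delta]$, and after precomposition with the diagonal each $\mrm{pr}_i \circ \Delta$ is the identity on $\T$; combined with $P_n\Omega\Sigma \simeq \Omega P_n \Sigma \simeq \Omega P_n(\id)\Sigma$, this yields the product factor $\Omega \prod_{i=1}^m P_n(\id)(\Sigma X)$.

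There is no real obstacle here: the argument is a formal assembly, and the statement is effectively the single-variable repackaging promised in the paragraph immediately preceding the corollary. The only point requiring a little care is verifying that, once the diagonal is inserted, the index $\kappa$ in the appeal to Theorem~\ref{thm: single-variable calc of fibre, restricted indexing} is indeed $\lfloor n/a \rfloor$ with $a = \sum_i a_i$: because all input variables become the single space $X$, the smash powers of the $X_i$ appearing in $\omega\alpha\Delta(X)$ add up to a single cumulative smash power of $X$, and the bound $\min_i \lfloor n/a_i \rfloor$ from the multivariable version collapses to $\lfloor n/(\sum_i a_i)\rfloor$ under the diagonal — exactly as in the proof of Theorem~\ref{thm: single-variable calc of fibre, restricted indexing} via the single-variable case of \cite[Lemma 2.4]{BrantnerHeuts}.
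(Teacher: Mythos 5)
Your proposal is correct and is exactly the assembly the paper intends: the corollary is stated with no written proof precisely because it follows by combining the single-variable analogues of Lemma~\ref{lem: decomposition of multi-variable excisive} and Lemma~\ref{lem: multi-variable approx to product} with Theorem~\ref{thm: single-variable calc of fibre, restricted indexing}, as the preceding paragraph indicates. Your extra care about why $\min_i\lfloor n/a_i\rfloor$ becomes $\lfloor n/\sum_i a_i\rfloor$ after the diagonal (via the one-variable case of Brantner--Heuts) is the right point to check and is handled correctly.
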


\subsection{Comparison with the identity functor evaluated on polyhedral products}
If one is just interested in polyhedral products of the form $(C\underline{X}, \underline{X})^K$, then a simpler decomposition is available directly from the Iriye-Kishimoto wedge decomposition of Theorem~\ref{thm: shifted complex homotopy type}. 

\begin{thm}\label{thm: calc decomposition cone on X}
Let $K$ be a simplicial complex on $[m]$ for which the fat wedge filtration of the real moment-angle complex on $K$ is trivial. For each $X_i$ connected, there is an equivalence 
\[
\Omega P_{\Vec{n}}((C(-), (-))^K)(\underline{X}) \simeq \Omega \sideset{}{'}\prod_{\omega \in \bb{L}(\es{P}_0(m))} P_\kappa(\id)(\Sigma\omega\beta(\underline{X})),
\]
natural in $\underline{X}$, where $\beta = \{\beta_I\}_{I \in \es{P}_0(m)}$ with $\beta_I(\underline{X}) = |K_I| \wedge \widehat{X}^{I}$, and $\kappa= \min_i(\lfloor \frac{n_i}{a_i} \rfloor)$ for $a_i$ the smash power of $X_i$ appearing in the word $\omega\beta(\underline{X})$.
\end{thm}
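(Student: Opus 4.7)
The plan follows the proof of Theorem \ref{thm: multivariable calc of fibre}, but omits the first two steps (the Denham-Suciu splitting of Corollary \ref{cor: cone fibre seq splits} and the James decomposition of Lemma \ref{lem: James splitting the decomp}) since the Iriye-Kishimoto decomposition (Theorem \ref{thm: shifted complex homotopy type}) directly expresses $(C\underline{X}, \underline{X})^K$ as a natural wedge of suspensions
\[
(C\underline{X}, \underline{X})^K \simeq \bigvee_{\emptyset \neq I \subseteq [m]} |\Sigma K_I| \wedge \widehat{X}^I \simeq \Sigma \bigvee_{\emptyset \neq I \subseteq [m]} \beta_I(\underline{X}).
\]
Since each $X_i$ is connected, each $\beta_I(\underline{X})$ is connected, so applying the Hilton-Milnor theorem (Lemma \ref{lem: countable HM}, or just its classical finite version) after looping yields a natural equivalence
\[
\Omega(C\underline{X}, \underline{X})^K \simeq \Omega \sideset{}{'}\prod_{\omega \in \bb{L}(\es{P}_0(m))} \Sigma\omega\beta(\underline{X}).
\]

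Next I would apply $P_{\vec{n}}$ and commute it through the outer loop, the filtered colimit defining the weak product, and the resulting finite products, using that $P_{\vec{n}}$ is a left exact left adjoint and hence preserves finite limits and filtered colimits. This gives
\[
\Omega P_{\vec{n}}((C(-), (-))^K)(\underline{X}) \simeq \Omega \sideset{}{'}\prod_{\omega \in \bb{L}(\es{P}_0(m))} P_{\vec{n}}(\Sigma\omega\beta)(\underline{X}).
\]
As in Theorem \ref{thm: multivariable calc of fibre}, each $\Sigma\omega\beta$ is a constant space (built from the various $|K_{I_j}|$) smashed with a fixed smash power of each $X_i$, so \cite[Lemma 2.4]{BrantnerHeuts} identifies $P_{\vec{n}}(\Sigma\omega\beta)(\underline{X})$ with $P_\kappa(\id)(\Sigma\omega\beta(\underline{X}))$ for $\kappa = \min_i(\lfloor n_i/a_i \rfloor)$, which assembles into the desired decomposition.

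There is no serious obstacle: the only substantive input required, beyond the naturality of Hilton-Milnor already exploited in the proof of Theorem \ref{thm: multivariable calc of fibre}, is the naturality of the Iriye-Kishimoto wedge decomposition in $\underline{X}$, which is exactly what Theorem \ref{thm: shifted complex homotopy type} provides.
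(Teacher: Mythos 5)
Your proposal is correct and is essentially the paper's own argument: the paper's proof of Theorem~\ref{thm: calc decomposition cone on X} literally reads ``proceed as in the proof of Theorem~\ref{thm: multivariable calc of fibre} without invoking the James splittings,'' which is exactly the route you take (Iriye--Kishimoto wedge decomposition, Hilton--Milnor after looping, commute $P_{\Vec{n}}$ through as a left exact left adjoint, then apply \cite[Lemma 2.4]{BrantnerHeuts}). Your observation that the Denham--Suciu splitting is also not needed here is accurate, since the functor in question is already $(C(-),(-))^K$ rather than a fibre split off from $((-),\underline{\ast})^K$.
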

\begin{proof}
Proceed as in the proof of Theorem \ref{thm: multivariable calc of fibre} without invoking the James splittings. 
\end{proof}

We now apply the decomposition of the identity functor on a wedge of Branter and Heuts~\cite[Theorem 1.2]{BrantnerHeuts} to the wedge decomposition of Iriye and Kishimoto.

\begin{prop}\label{prop: BH for cone on X}
Let $K$ be a simplicial complex on $[m]$ for which the fat wedge filtration of the real moment-angle complex on $K$ is trivial. For each $X_i$ connected, there is an equivalence
\[
\Omega P_n(\id)((C\underline{X}, \underline{X})^K) \simeq \Omega \sideset{}{'}\prod_{\omega \in \bb{L}(\es{P}_0(m))}  P_{\lfloor \frac{n}{|\omega|} \rfloor}(\id)(\Sigma\omega \beta (\underline{X})),
\]
\end{prop}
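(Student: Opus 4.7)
The plan is to combine the Iriye--Kishimoto wedge decomposition of $(C\underline{X},\underline{X})^K$ with the Brantner--Heuts Hilton--Milnor decomposition for the Goodwillie tower of the identity on a wedge of suspensions. The statement should come out as a direct corollary of the two external inputs.

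First, since the fat wedge filtration on the real moment-angle complex of $K$ is assumed trivial, Theorem~\ref{thm: shifted complex homotopy type} supplies a natural equivalence
\[
(C\underline{X},\underline{X})^K \simeq \bigvee_{\emptyset \neq I \subseteq [m]} |\Sigma K_I| \wedge \widehat{X}^I = \bigvee_{I \in \es{P}_0(m)} \Sigma\beta_I(\underline{X}),
\]
which realises the polyhedral product as a finite wedge of suspensions of the functors $\beta_I(\underline{X}) = |K_I|\wedge \widehat{X}^I$. Because each $X_i$ is pointed and connected and each $I$ is nonempty, $\widehat{X}^I$ is connected, and hence so is $\beta_I(\underline{X})$; this verifies the connectivity hypothesis required to feed the wedge into Brantner--Heuts.

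Second, I would apply the Brantner--Heuts theorem recalled in the introduction to the finite wedge of suspensions $\bigvee_{I\in \es{P}_0(m)} \Sigma\beta_I(\underline{X})$. Taking the generators of the free Lie algebra to be indexed by the summands, that is by $\es{P}_0(m)$, this produces
\[
\Omega P_n(\id)\Bigl(\bigvee_{I \in \es{P}_0(m)} \Sigma\beta_I(\underline{X})\Bigr) \simeq \sideset{}{'}\prod_{\omega \in \bb{L}(\es{P}_0(m))} \Omega P_{\lfloor n/|\omega|\rfloor}(\id)(\Sigma\omega\beta(\underline{X})),
\]
where $|\omega|$ denotes the word length, and naturality in $\underline{X}$ is inherited from the naturality of both the Iriye--Kishimoto splitting and the Brantner--Heuts equivalence.

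There is essentially no obstacle to overcome: the proposition is a direct combination of the two cited decompositions, and the only routine verification is the connectivity of the $\beta_I(\underline{X})$, which is immediate from the standing connectivity assumption on the $X_i$.
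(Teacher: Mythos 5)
Your proposal is correct and follows essentially the same route as the paper: invoke Theorem~\ref{thm: shifted complex homotopy type} to write $(C\underline{X},\underline{X})^K$ as a wedge of suspensions of the $\beta_I(\underline{X})$, then apply the Brantner--Heuts decomposition to that wedge. The only addition is your explicit check that each $\beta_I(\underline{X})$ is connected, which the paper leaves implicit.
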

\begin{proof}
By Theorem~\ref{thm: shifted complex homotopy type}, there is a natural equivalence 
\[
(C\underline{X}, \underline{X})^K \simeq \bigvee_{I \in \es{P}_0(m)} \Sigma|K_I| \wedge \widehat{X}^I,
\]
and hence a natural equivalence
\[
P_n(\id)((C\underline{X}, \underline{X})^K ) \simeq P_n(\id)(\bigvee_{I \in \es{P}_0(m)} |\Sigma K_I| \wedge \widehat{X}^I).
\]
An application of~\cite[Theorem 1.2]{BrantnerHeuts}, applied to the functor $\bigvee_{I \in\es{P}_0(m)} : \es{S}^{\es{P}_0(m)}_\ast \to \T$ yields the result.
\end{proof}

\begin{rem}\label{rem: compare with id}
The content of Theorem~\ref{thm: calc decomposition cone on X} and Proposition~\ref{prop: BH for cone on X} should be interpreted as a comment on the difference between the calculus of polyhedral products as a functor and the calculus of the identity applied to polyhedral products: the fact that polyhedral products do not, in general, preserve colimits results in a discrepancy between the two `calculi' approaches to polyhedral products. More precisely, the right hand side of the two theorems differs only in the Goodwillie degrees, and these degrees are smaller in Theorem~\ref{thm: calc decomposition cone on X}, so the tower there is a finer stratification: it converges slower. The mechanism is that the Goodwillie degrees in Theorem~\ref{thm: calc decomposition cone on X} experience a lag coming from $\beta$, wheras those in Proposition~\ref{prop: BH for cone on X} do not.
\end{rem}

\section{Convergence}\label{sec: convergence}

The Goodwillie tower of a functor $F: \es{C} \to \es{D}$ is said to \emph{converge at $X \in \es{C}$} if the canonical comparison map
\[
F(X) \longrightarrow \lim_{n \in \bb{N}^\op}~P_{n}F(X),
\]
is an equivalence. For multi-variable functors $F: \prod_{i=1}^m \es{C}_i \to \es{D}$, we say that the \emph{Goodwillie calculus converges at $\underline{X} \in \prod_{i=1}^m \es{C}_i$} if the comparison map
\[
F(\underline{X}) \longrightarrow \lim_{\Vec{n} \in  (\bb{N}^\op)^{\times m}}~P_{\Vec{n}}F(\underline{X}),
\]
is an equivalence. In this section, we relate multi-variable and single-variable convergence and prove that under our mild hypotheses on $K$, the polyhedral product functor $(-, \underline{\ast})^K$ converges integrally but not after suitable chromatic localization. 

\subsection{Multi-variable vs. single-variable convergence} We now show that multi-variable convergence is equivalent to single-variable convergence.

\begin{lem}\label{lem: convergence on diagonal}
Let $F: \prod_{i=1}^m \es{C}_i \to \es{D}$, For $n \in \bb{N}$ denote by $\Vec{v}_n$ the multi-index $(n, \dots, n)$. The multi-variable Goodwillie calculus of $F$ converges at $\underline{X}$ if and only if the canonical map
\[
    F(\underline{X}) \longrightarrow \lim_{n \in \bb{N}^\op}~P_{\Vec{v}_n}F(\underline{X}),
\]
is an equivalence.
\end{lem}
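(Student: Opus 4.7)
The plan is to identify the two limits appearing in the statement through a cofinality argument, from which the biconditional is immediate. The diagonal functor $\delta : \bb{N} \to \bb{N}^{\times m}$, $n \mapsto \Vec{v}_n = (n, \dots, n)$, restricts the multi-variable tower and induces a canonical comparison
\[
\lim_{\Vec{n} \in (\bb{N}^\op)^{\times m}} P_{\Vec{n}} F(\underline{X}) \longrightarrow \lim_{n \in \bb{N}^\op} P_{\Vec{v}_n} F(\underline{X})
\]
which fits into a commutative triangle with the two canonical maps out of $F(\underline{X})$. By two-out-of-three for equivalences, it therefore suffices to show that this restriction map is itself an equivalence.

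For this, I would verify that $\delta$ is cofinal, equivalently that $\delta^{\op} : \bb{N}^\op \to (\bb{N}^\op)^{\times m}$ is initial; restriction along an initial functor preserves limits by the standard cofinality theorem (see e.g. \cite[\href{https://kerodon.net/tag/03U2}{Corollary 03U2}]{kerodon}, which the paper has already invoked). Since $\bb{N}^{\times m}$ is a poset, cofinality of $\delta$ amounts to checking that for every $\Vec{n} = (n_1, \dots, n_m)$ the slice $\Vec{n} \downarrow \delta$ has weakly contractible nerve. That slice is the subposet $\{N \in \bb{N} : N \geq \max_i n_i\} \subseteq \bb{N}$, which is non-empty and filtered (indeed totally ordered) and therefore has contractible nerve.

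There is no genuine obstacle beyond careful bookkeeping with opposite categories: the substantive observation is merely that the diagonal $\bb{N} \to \bb{N}^{\times m}$ is dominated coordinate-wise by the maximum, which is exactly the reason the comparison of towers works. The only mild hazard is keeping `cofinal' versus `initial' straight, so that the cofinality theorem is applied in the direction that implies the equivalence of \emph{limits} (as opposed to colimits) which we actually need.
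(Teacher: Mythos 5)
Your proposal is correct and takes essentially the same route as the paper: both reduce the statement to (left) cofinality of the opposite of the diagonal $\bb{N} \to \bb{N}^{\times m}$, verified by the observation that every multi-index $\Vec{n}$ is dominated by some constant multi-index $\Vec{v}_N$ (the paper takes $N = n_1 + \cdots + n_m$, you take $N = \max_i n_i$; either works), and then invoke the standard cofinality theorem. Your explicit check that the relevant slice is a non-empty totally ordered poset with contractible nerve is just a slightly more spelled-out version of the paper's one-line verification.
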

\begin{proof}
The claim follows from the fact that the diagonal functor
\[
\Vec{v}_{(-)}: \bb{N} \longrightarrow \bb{N}^{\times m}, \ n \longmapsto (n, \dots, n),
\]
is right cofinal, in the sense of~\cite[\href{https://kerodon.net/tag/02N1}{Definition 02N1}]{kerodon}, then the opposite functor will be left cofinal as required. By~\cite[\href{https://kerodon.net/tag/03U2}{Corollary 03U2}]{kerodon} the claim follows from the observation that for any $(n_1, \dots, n_m) \in \bb{N}^{\times m}$ there exists $n \in \bb{N}$ such that $(n_1, \dots, n_m) \leq \Vec{v}_n$, i.e., choose $n= n_1+\cdots+n_m$.
\end{proof}

\begin{rem}
We use the (co)finality terminology from~\cite{kerodon}, but the reader should proceed with caution as the terminology varies vastly between sources. 
\end{rem}

\begin{lem}\label{lem: vec n to k}
For any multi-index $\Vec{n} = (n_1, \dots, n_m)$ define $k=k(\Vec{n}) = \min\{n_i \mid 1 \leq i \leq m\}$. If $F: \prod_{i=1}^m \es{C}_i \to \es{D}$ is a multi-variable functor, then there is a natural transformation of multi-variable functors
\[
P_{\Vec{n}}F \longrightarrow P_{k}F,
\]
compatible with the universal natural transformations from $F$.
\end{lem}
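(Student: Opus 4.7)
The plan is to construct the natural transformation as the unique factorization provided by the universal property of $P_{\Vec{n}}$ from Theorem~\ref{thm: existence multi-variable excisive}. Writing $\Vec{v}_k = (k, \dots, k)$, the hypothesis $k = \min_i n_i$ gives $\Vec{v}_k \leq \Vec{n}$ componentwise, which will be the key inequality underlying the argument.

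First, I would verify that $P_k F$ is itself $\Vec{n}$-excisive. In the most natural reading, $P_k F$ denotes the single-variable $k$-excisive approximation of $F$ regarded as a functor on $\prod_{i=1}^m \es{C}_i$ (which carries finite colimits since each $\es{C}_i$ does). The point is that a $k$-excisive functor on the product is automatically $\Vec{v}_k$-excisive: a strongly cocartesian $(k+1)$-cube in $\es{C}_i$, extended constantly in the other variables, remains strongly cocartesian in $\prod_i \es{C}_i$, because the extra $2$-faces are squares of identities and therefore trivially pushouts. Combined with the chain of inclusions $\exc{k}(\es{C}_i, -) \hookrightarrow \exc{n_i}(\es{C}_i, -)$ in each variable (applying $k \leq n_i$), this places $P_k F$ in $\exc{\Vec{n}}(\prod_i \es{C}_i, \es{D})$. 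If instead one reads $P_k F$ as $P_{\Vec{v}_k} F$, the first half of this step is built in and only the index comparison remains.

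Second, with $P_k F$ now known to be $\Vec{n}$-excisive, the universal property of $P_{\Vec{n}}$ from Theorem~\ref{thm: existence multi-variable excisive} applies directly: the unit $F \to P_k F$ factors uniquely through the universal $\Vec{n}$-excisive approximation $F \to P_{\Vec{n}} F$, yielding the required natural transformation $P_{\Vec{n}} F \to P_k F$. Compatibility with the universal transformations from $F$ is immediate from the uniqueness clause of this factorization.

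I do not anticipate any real obstacle. The argument is entirely formal, being a direct consequence of the adjunction in Theorem~\ref{thm: existence multi-variable excisive} together with the stability of strongly cocartesian cubes under extension by constant diagrams in a product $\infty$-category. The only step requiring any care is the containment of excisive subcategories, which follows at once from the cubical characterization of $\Vec{n}$-excisiveness.
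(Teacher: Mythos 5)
Your proof is correct, but it takes a genuinely different route from the paper's. You argue via the universal property: you first show that the single-variable approximation $P_kF$ is itself $\Vec{n}$-excisive --- a strongly cocartesian $(k+1)$-cube in one variable, extended by constants in the others, remains strongly cocartesian in $\prod_i \es{C}_i$ because its extra $2$-faces are identity squares, so single-variable $k$-excisiveness implies $\Vec{v}_k$-excisiveness, and $\Vec{v}_k \leq \Vec{n}$ then gives $\Vec{n}$-excisiveness --- and then invoke the adjunction of Theorem~\ref{thm: existence multi-variable excisive} to factor $F \to P_kF$ essentially uniquely through $F \to P_{\Vec{n}}F$, with compatibility coming for free from uniqueness. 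The paper instead constructs the map by hand at the level of the approximating functors: the inclusions $\es{P}_0(k+1) \hookrightarrow \es{P}_0(n_i+1)$ followed by the diagonal $\es{P}_0(k+1) \to \prod_{i=1}^m \es{P}_0(k+1)$ induce a natural transformation $T_{\Vec{n}}F \to T_kF$ under $F$, which is then iterated and passed to the colimit defining $P_{\Vec{n}}$ and $P_k$. Your argument is shorter and more conceptual but leans on the containment of excisive subcategories, which you justify correctly; the paper's is more explicit and sidesteps that verification at the cost of manipulating the $T$-constructions directly. Both deliver exactly the compatibility clause that the subsequent lemma on towers requires.
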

\begin{proof}
It suffices to provide a natural transformation $T_{\Vec{n}}F \to T_{k}F$ which is compatible with the canonical natural transformation from $F$. By definition
\[
T_{\Vec{n}}F(X_1, \dots, X_m) = \lim_{(S_i) \in \prod_{i=1}^m \es{P}_0(n_i+1)} F(C_{S_1}(X_1), \dots, C_{S_m}(X_m)).
\]
For each $1 \leq i \leq m$ there is a canonical inclusion $\es{P}_0(k+1) \hookrightarrow \es{P}_0(n_i+1)$, which induces a natural map 
\[
T_{\Vec{n}}F(X_1, \dots, X_m) \longrightarrow \lim_{(S_i) \in \prod_{i=1}^m \es{P}_0(k+1)} F(C_{S_1}(X_1), \dots, C_{S_m}(X_m)).
\]
The diagonal map $\es{P}_0(k+1) \to \prod_{i=1}^m \es{P}_0(k+1)$ induces a further natural map
\[
\lim_{(S_i) \in \prod_{i=1}^m \es{P}_0(k+1)} F(C_{S_1}(X_1), \dots, C_{S_m}(X_m)) \longrightarrow \lim_{S \in  \es{P}_0(k+1)} F(C_{S}(X_1), \dots, C_{S}(X_m)),
\]
and hence to $T_kF(X_1, \dots, X_m)$ since $C_S(X_1, \dots X_m) = (C_S(X_1), \dots, C_S(X_m))$. The composite of these maps provides the required natural transformation
\[
T_{\Vec{n}}F \longrightarrow T_kF.
\]
Iterating provides a map
\[
T^q_{\Vec{n}}F \longrightarrow T^q_kF,
\]
compatible with $q$, and hence a map
\[
P_{\Vec{n}}F = \colim_qT^q_{\Vec{n}}F  \longrightarrow \colim_q T^q_kF = P_{k}F. \qedhere
\]
\end{proof}

\begin{lem}
Let $F: \prod_{i=1}^m \es{C}_i \to \es{D}$, and for each $n \in \bb{N}$, denote by $\Vec{v}_n$ the multi-index $(n, \dots, n)$ and by $\Vec{u}_n$ the multi-index $(\lfloor \frac{n}{m} \rfloor, \dots, \lfloor \frac{n}{m} \rfloor)$. There are maps of towers 
\[
\{P_{mn}F\}_{n \in \bb{N}^\op} \longrightarrow \{P_{\Vec{v}_n}F\}_{n \in \bb{N}^\op} \longrightarrow \{P_{n}F\}_{n \in \bb{N}^\op} \longrightarrow \{P_{\Vec{u}_n}F\}_{n \in \bb{N}^\op}
\]
which are compatible in the sense that for each $n \in \bb{N}$, the composites
\begin{align*}
    P_{mn}F &\longrightarrow P_{\Vec{v}_n}F \longrightarrow P_nF,\\
    P_{\Vec{v}_n}F &\longrightarrow P_nF \longrightarrow P_{\Vec{u}_n}F,
\end{align*}
gives the canonical map between excisive approximations, and that the limits of alternating rows are equivalent.
\end{lem}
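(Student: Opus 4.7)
The plan is to realise each arrow as a map of left adjoints arising from an inclusion of full sub-$\infty$-categories of excisive functors. Two structural facts suffice: \textbf{(a)} any single-variable $n$-excisive functor $F\colon \prod_i \es{C}_i\to \es{D}$ is $\Vec{v}_n$-excisive, since for each $i$, a strongly cocartesian $(n+1)$-cube in $\es{C}_i$ extends to a strongly cocartesian cube in $\prod_j \es{C}_j$ by taking the constant diagram in the other coordinates (pushouts in products are computed componentwise and constant squares are pushouts); and \textbf{(b)} any $\Vec{n}$-excisive multi-variable functor is $|\Vec{n}|$-excisive as a single-variable functor on the product, where $|\Vec{n}|=\sum_i n_i$. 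The second fact is classical (see e.g.~\cite{GoodCalcIII}) and is the only non-formal input.

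Combining (a), (b) and the inequality $|\Vec{u}_n|=m\lfloor n/m\rfloor\leq n$ yields a chain of inclusions of full subcategories of $\Fun(\prod_i \es{C}_i,\es{D})$:
\[
\exc{\lfloor n/m\rfloor}\subseteq \exc{\Vec{u}_n}\subseteq \exc{n}\subseteq \exc{\Vec{v}_n}\subseteq \exc{mn}.
\]
Each inclusion $\exc{A}\subseteq \exc{B}$ induces by universality a natural transformation $P_B F\to P_A F$: the canonical map $F\to P_A F$ targets a $B$-excisive functor and therefore factors uniquely through $F\to P_BF$. Concatenating produces the four-term chain of tower maps in the statement; naturality in $n$ is automatic because the inclusions are compatible with the standard tower comparisons. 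The compatibility of the stated composites with the canonical tower maps is immediate: each composite corresponds to a composite of subcategory inclusions which coincides with the standard comparison inclusion, and so induces the standard tower map by universality.

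For the limit equivalence of alternating rows, one invokes cofinality (cf.~\cite[Corollary 03U2]{kerodon}). The subsequence $\{mn\}_n$ is cofinal in $\bb{N}$, so $\lim_{n\in \bb{N}^\op}P_{mn}F\simeq \lim_{n\in \bb{N}^\op}P_n F$. Both $\{\Vec{v}_n\}_n$ and $\{\Vec{u}_n\}_n$ are cofinal subfamilies of $\bb{N}^{\times m}$ (given $\Vec{k}$, take $n=\max_i k_i$ and $n=m\max_i k_i$ respectively), so $\lim_n P_{\Vec{v}_n}F\simeq \lim_n P_{\Vec{u}_n}F$, both being equivalent to the full multi-variable limit over $\bb{N}^{\times m}$. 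The main obstacle throughout is invoking fact (b); the rest of the argument is entirely formal.
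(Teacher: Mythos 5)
Your proposal is correct, and its overall skeleton matches the paper's: both arguments rest on the universal property of the excisive approximations, on the fact that an $\Vec{n}$-excisive functor is $(\sum_i n_i)$-excisive in the single variable (your fact (b), which is indeed the only non-formal input), and on a cofinality argument for the limit statements. The one genuine divergence is in how the maps $P_{\Vec{v}_n}F \to P_nF$ are produced. The paper constructs the map $P_{\Vec{n}}F \to P_{k}F$ (for $k = \min_i n_i$) explicitly at the level of the $T$-constructions, via the inclusions $\es{P}_0(k+1) \hookrightarrow \es{P}_0(n_i+1)$ and the diagonal (this is its Lemma~\ref{lem: vec n to k}), and then verifies after the fact that the relevant triangles under $F$ commute using the universal property of $P_NF$. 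You instead observe that a single-variable $n$-excisive functor on $\prod_i\es{C}_i$ is automatically $\Vec{v}_n$-excisive — your fact (a), correctly justified by extending a strongly cocartesian cube in one coordinate by constant (hence strongly cocartesian) cubes in the others — so that the inclusion $\exc{n}\subseteq\exc{\Vec{v}_n}$ hands you the map by adjunction. This makes every compatibility in the statement an instance of essential uniqueness of maps under $F$ into an excisive target, and renders the paper's explicit cube-level lemma unnecessary; the trade-off is that you must supply fact (a), which the paper's construction sidesteps. (The first inclusion $\exc{\lfloor n/m\rfloor}\subseteq\exc{\Vec{u}_n}$ in your chain is not needed for the statement, but it is harmless.) Both treatments of the limit equivalences are the same cofinality argument.
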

\begin{proof}
To prove the compatibility claim we prove something slightly different which implies the claim. Let $\Vec{n}=(n_1, \dots,n_m)$ be a multi-index and denote by $N=N(\Vec{n}) = n_1+\cdots + n_m$. Since every $\Vec{n}$-excisive multi-variable functor is $N$-excisive as a functor of a single variable, there is a unique map $P_NF \to P_{\Vec{n}}F$ under $F$. Denote by $k = k(\Vec{n}) = \min\{n_i \mid 1\leq i \leq m\}$. By Lemma~\ref{lem: vec n to k}, there is a natural transformation $P_{\Vec{n}}F \to P_kF$ compatible with the maps from $F$. It suffices to show that the diagonal map
% https://q.uiver.app/#q=WzAsMyxbMCwwLCJQX3tcXFZlY3tufX1GIl0sWzEsMCwiUF9ORiJdLFswLDEsIlBfa0YiXSxbMCwyXSxbMSwwXSxbMSwyXV0=
\[\begin{tikzcd}
	{P_{\Vec{n}}F} & {P_NF} \\
	{P_kF}
	\arrow[from=1-1, to=2-1]
	\arrow[from=1-2, to=1-1]
	\arrow[from=1-2, to=2-1]
\end{tikzcd}\]
is the canonical map under $F$, but this follows from $P_kF$ being $N$-excisive and the universal property of $P_NF$. This proof works equally well by exchanging the roles of the multi-variable and single-variable functors, proving the other compatibility condition.

The statement about equivalent limits is another right cofinality argument, analogous to that of Lemma~\ref{lem: convergence on diagonal}, using that the ``multiplication by $m$'' functor
\[
m : \bb{N} \longrightarrow \bb{N}, n \longmapsto mn,
\]
is right cofinal, and similarly for the multi-variable limits.
\end{proof}

\begin{cor}\label{cor: diagonal and single variable convergence}
Let $F: \prod_{i=1}^m \es{C}_i \to \es{D}$, and let $n \in \bb{N}$. Denote by $\Vec{v}_n$ the multi-index $(n, \dots, n)$ and by $\Vec{u}_n$ the multi-index $(\lfloor \frac{n}{m} \rfloor, \dots, \lfloor \frac{n}{m} \rfloor)$. There are equivalences
\[
\lim_{n\in \bb{N}^\op}~P_{mn}F \to \lim_{n\in \bb{N}^\op}~P_{\Vec{v}_n}F \to \lim_{n\in \bb{N}^\op}~P_{n}F \to \lim_{n\in \bb{N}^\op}~P_{\Vec{u}_n}F.  
\]
\end{cor}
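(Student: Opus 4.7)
The plan is to deduce this corollary from the preceding lemma by a purely formal interleaving argument, since that lemma already packages up the substantive cofinality work and the compatibility data. Applying $\lim_{n\in \bb{N}^\op}$ to the four maps of towers from the preceding lemma yields precisely the three maps asserted in the statement, so the only remaining task is to show that each of them is an equivalence.

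The two key inputs that I would invoke from the preceding lemma are as follows. First, the alternating-rows assertion gives that the canonical maps $\lim_{n\in\bb{N}^\op} P_{mn}F \to \lim_{n\in\bb{N}^\op} P_{n}F$ and $\lim_{n\in\bb{N}^\op} P_{\Vec{v}_n}F \to \lim_{n\in\bb{N}^\op} P_{\Vec{u}_n}F$ are equivalences, since multiplication by $m$ (respectively, its multi-indexed analogue) is right cofinal. Second, the compatibility conditions on adjacent composites imply, after passing to limits, that the composite $\lim P_{mn}F \to \lim P_{\Vec{v}_n}F \to \lim P_nF$ coincides with the first cofinality equivalence, and that the composite $\lim P_{\Vec{v}_n}F \to \lim P_nF \to \lim P_{\Vec{u}_n}F$ coincides with the second. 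From the first composite being an equivalence, the middle map $\lim P_{\Vec{v}_n}F \to \lim P_nF$ admits a right inverse (in the homotopy category); from the second composite being an equivalence, the same map admits a left inverse. A map with both a left and a right inverse is an equivalence, and then a standard 2-out-of-3 argument applied to each of the two composite equivalences shows that the remaining two maps $\lim P_{mn}F \to \lim P_{\Vec{v}_n}F$ and $\lim P_nF \to \lim P_{\Vec{u}_n}F$ are equivalences as well.

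I expect no genuine obstacle here: the preceding lemma has already done the two nontrivial pieces of work, namely establishing the interleaving maps of towers and recording the compatibility conditions that identify their composites with the canonical maps. The corollary is then a purely formal consequence, with the only subtlety being the standard $\infty$-categorical fact that a map with compatible left and right inverses is an equivalence.
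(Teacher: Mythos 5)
Your argument is correct and is exactly the interleaving argument the paper intends: the preceding lemma supplies the compatible maps of towers and the cofinality equivalences between alternating rows, and passing to limits plus the left/right-inverse and two-out-of-three observations yields the corollary (the paper leaves this formal step implicit). No gaps.
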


\begin{cor}\label{cor: single iff multi convergence}
Let $F: \prod_{i=1}^m \es{C}_i \to \es{D}$. The multi-variable Goodwillie calculus of $F$ converges at $\underline{X}$ if and only if the single variable Goodwillie tower of $F$ converges at $\underline{X}$.
\end{cor}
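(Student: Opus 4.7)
The plan is to chain together the two preceding results to obtain a compatible zig-zag between the multi-variable and single-variable comparison maps out of $F(\underline{X})$, and then observe that one side of the zig-zag is an equivalence iff the other is.

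First, I would invoke Lemma~\ref{lem: convergence on diagonal} to reduce multi-variable convergence of $F$ at $\underline{X}$ to the condition that
\[
F(\underline{X}) \longrightarrow \lim_{n \in \bb{N}^\op}~P_{\Vec{v}_n}F(\underline{X})
\]
is an equivalence, where $\Vec{v}_n = (n, \dots, n)$. Next, I would apply Corollary~\ref{cor: diagonal and single variable convergence} to identify this limit with $\lim_{n \in \bb{N}^\op}~P_n F(\underline{X})$. Chasing through the zig-zag then identifies multi-variable convergence at $\underline{X}$ with the statement that
\[
F(\underline{X}) \longrightarrow \lim_{n \in \bb{N}^\op}~P_n F(\underline{X})
\]
is an equivalence, i.e., with single-variable convergence at $\underline{X}$.

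The main thing to check, and the only step that is not purely formal, is that the equivalences in Corollary~\ref{cor: diagonal and single variable convergence} really do sit \emph{under} $F(\underline{X})$, so that having equivalent limits upgrades to having equivalent comparison maps. This compatibility is guaranteed by the compatibility clause in the preceding lemma, namely that the composites $P_{mn}F \to P_{\Vec{v}_n}F \to P_n F$ and $P_{\Vec{v}_n}F \to P_n F \to P_{\Vec{u}_n}F$ agree with the canonical transition maps induced by the universal properties of the excisive approximations. Once this is noted, the corollary follows formally, with no further computation required.
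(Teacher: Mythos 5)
Your argument is correct and matches the paper's proof, which likewise just combines Lemma~\ref{lem: convergence on diagonal} with Corollary~\ref{cor: diagonal and single variable convergence}. Your explicit remark that the identifications of limits must sit under $F(\underline{X})$ — guaranteed by the compatibility clause of the preceding lemma — is a point the paper leaves implicit, and it is the right thing to check.
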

\begin{proof}
Combine Lemma~\ref{lem: convergence on diagonal} with Corollary~\ref{cor: diagonal and single variable convergence}.
\end{proof}

\subsection{Integral convergence of polyhedral products} Using the decompositions of Section~\ref{sec: integral decomps} the Goodwillie calculus of the polyhedral product functor $((-), \ast)^K$ converges after looping, hence the only obstruction to convergence is on the level of connected components.

\begin{lem}
Let $K$ be a simplicial complex on $[m]$ such that the fat wedge filtration on the real moment-angle complex on $K$ is trivial. If $\underline{X}$ is component-wise simply connected, then the polyhedral product functor 
\[
\Omega((-), \underline{\ast})^K : \es{S}_\ast^{\times m} \longrightarrow \T, \ \underline{X} \longmapsto \Omega(\underline{X}, \underline{\ast})^K,
\]
has convergent Goodwillie tower at $\underline{X}$ if and only if the polyhedral product functor 
\[
\Omega(C\Omega(-), \Omega(-))^K : \es{S}_\ast^{\times m} \longrightarrow \T, \ \underline{X} \longmapsto \Omega(C\Omega \underline{X}, \Omega\underline{X})^K,
\]
has convergent Goodwillie tower at $\underline{X}$.
\end{lem}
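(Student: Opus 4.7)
The plan is to leverage the natural product splitting of Corollary~\ref{cor: cone fibre seq splits}, which gives a natural equivalence
\[
\Omega(\underline{X},\underline{\ast})^K \simeq \Omega (C\Omega\underline{X},\Omega\underline{X})^K \times \prod_{i=1}^m \Omega X_i,
\]
and then to show that the third factor always has convergent tower on simply connected input, so the splitting is, in effect, between the part of the tower responsible for convergence and a part which is automatically convergent.

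\textbf{Step 1 (splitting at each stage of the tower).} Since $P_n$ is left exact (Theorem~\ref{thm: existence excisive}), it preserves finite products. Applying $P_n$ to the natural equivalence above yields a natural equivalence
\[
P_n\bigl(\Omega((-),\underline{\ast})^K\bigr)(\underline{X}) \simeq P_n\bigl(\Omega (C\Omega(-),\Omega(-))^K\bigr)(\underline{X}) \times P_n\bigl(\textstyle\prod_i \Omega(-)\bigr)(\underline{X}),
\]
compatible in $n$, and compatible with the universal maps from $\Omega(\underline{X},\underline{\ast})^K$. Taking the limit in $n$, and using that limits commute with finite products, the canonical comparison map from $\Omega(\underline{X},\underline{\ast})^K$ to $\lim_n P_n\bigl(\Omega((-),\underline{\ast})^K\bigr)(\underline{X})$ splits as the product of the comparison maps for each factor. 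It is therefore an equivalence if and only if each factor's comparison map is.

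\textbf{Step 2 (the product factor converges for free).} I would next argue that the Goodwillie tower of $\prod_i \Omega(-)$ always converges at a component-wise simply connected $\underline{X}$. Via Corollary~\ref{cor: single iff multi convergence}, it is equivalent to check multi-variable convergence. Since $\Omega$ is a finite limit it commutes with $P_{\Vec{n}}$, and together with Lemma~\ref{lem: multi-variable approx to product} this gives
\[
P_{\Vec{n}}\bigl(\textstyle\prod_i \Omega(-)\bigr)(\underline{X}) \simeq \prod_{i=1}^m \Omega P_{n_i}(\id)(X_i).
\]
Taking the limit over $\Vec{n}$ and using that $\Omega$ and finite products commute with limits, convergence reduces to convergence of the Goodwillie tower of the identity on each $X_i$ separately, which holds by the classical result of Goodwillie since each $X_i$ is simply connected.

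\textbf{Step 3 (conclude).} Combining Step 1 and Step 2, the comparison map for $\Omega((-),\underline{\ast})^K$ at $\underline{X}$ is an equivalence if and only if the comparison map for $\Omega(C\Omega(-),\Omega(-))^K$ at $\underline{X}$ is, which is exactly the claim.

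There is no serious obstacle here: the argument is formal, reduction by splitting, and the only input beyond the two cited results is the convergence of the Goodwillie tower of the identity on simply connected spaces. The mildly delicate point is ensuring the splitting at each stage of the tower is compatible with the universal maps from $F$, so that the comparison maps really do split as a product; this follows because the splitting in Corollary~\ref{cor: cone fibre seq splits} is natural in $\underline{X}$, hence is a natural transformation of functors, to which $P_n$ can be applied functorially.
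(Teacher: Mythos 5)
Your proof is correct and follows essentially the same route as the paper: the paper's proof invokes Lemma~\ref{lem: decomposition of multi-variable excisive} (which is exactly your Step 1, obtained by applying the left exact $P_{\Vec{n}}$ to the natural splitting of Corollary~\ref{cor: cone fibre seq splits}) and then disposes of the product factor via Lemma~\ref{lem: multi-variable approx to product} together with convergence of the tower of the identity on simply connected spaces, just as in your Step 2. Your extra care about compatibility of the splitting with the universal maps $F \to P_nF$ is a point the paper leaves implicit, but there is no substantive difference.
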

\begin{proof}
By Lemma~\ref{lem: decomposition of multi-variable excisive} it suffices to prove that the product functor has convergent Goodwillie tower at component-wise simply connected $\underline{X}$. This follows from Lemma~\ref{lem: multi-variable approx to product}, and the fact that the Goodwillie tower of the identity converges on simply connected spaces, see e.g.,~\cite[Theorem 1.13]{GoodCalcIII} and ~\cite[Example 4.3]{GoodCalcII}.
\end{proof}

We now show that under the conditions above, $\Omega(C(-), (-))^K$ converges integrally on simply connected spaces which come to us as suspensions of connected spaces. 

\begin{lem}
Let $K$ be a simplicial complex on $[m]$ such that the fat wedge filtration on the real moment-angle complex on $K$ is trivial. If $\underline{X}$ is component-wise connected, then the Goodwillie tower of the functor 
\[
\Omega(C\Omega(-), \Omega(-))^K \circ(\Sigma(-)) : \es{S}_\ast^{\times m} \longrightarrow \T, \ \underline{X} \longmapsto \Omega(C\Omega\Sigma\underline{X}, \Omega\Sigma\underline{X})^K,
\]
converges at $\underline{X}$.
\end{lem}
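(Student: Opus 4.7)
The plan is to combine the loop space decomposition of Proposition~\ref{prop: integral decomposition of the fibre} with the multi-variable Goodwillie decomposition of Theorem~\ref{thm: multivariable calc of fibre}, thereby reducing the desired convergence to the classical convergence of the Goodwillie tower of the identity factor-wise on simply connected spaces.

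First I would apply Corollary~\ref{cor: single iff multi convergence} to replace single-variable convergence with multi-variable convergence. Using Proposition~\ref{prop: integral decomposition of the fibre} on the source, Theorem~\ref{thm: multivariable calc of fibre} on the tower, and the fact that the finite limit $\Omega$ commutes with arbitrary limits, the canonical comparison map from the functor to the limit of its $\Vec{n}$-th excisive approximations is identified, under a zigzag of natural equivalences, with the map
\[
\Omega \sideset{}{'}\prod_{\omega \in \bb{L}(\mathscr{I}_m)} \Sigma \omega \alpha(\underline{X}) \longrightarrow \Omega \lim_{\Vec{n}} \sideset{}{'}\prod_{\omega \in \bb{L}(\mathscr{I}_m)} P_{\kappa}(\id)(\Sigma \omega \alpha(\underline{X})),
\]
which, factor-wise, is the unit of the Goodwillie tower of the identity applied to $\Sigma \omega \alpha(\underline{X})$. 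Since each $X_i$ is connected, $\Sigma \omega \alpha(\underline{X})$ is a suspension of a non-empty iterated smash of connected spaces, and is therefore simply connected; meanwhile for each fixed $\omega$, the index $\kappa = \min_i \lfloor n_i / a_i \rfloor$ tends to infinity as $\Vec{n} \to \infty$ in $\bb{N}^{\times m}$. Thus each factor converges by the classical convergence of the tower of the identity on simply connected spaces.

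The delicate step is the interchange of the inverse limit over $\Vec{n}$ with the weak product over $\bb{L}(\mathscr{I}_m)$. Here Corollary~\ref{cor: weak product is product after Pn} is decisive: for each fixed $\Vec{n}$ the weak product on the right-hand side collapses to a finite cartesian product, since $P_0(\id) \simeq \ast$ makes all but finitely many factors contractible. Hence $\lim_{\Vec{n}}$ commutes termwise, yielding $\prod_{\omega} \Sigma \omega \alpha(\underline{X})$. The comparison with the weak product $\sideset{}{'}\prod_{\omega} \Sigma \omega \alpha(\underline{X})$ is then a connectivity argument: since the connectivity of $\Sigma \omega \alpha(\underline{X})$ grows without bound in the combined smash powers appearing in $\omega \alpha$, only finitely many Lie words $\omega$ contribute below any fixed dimension, so the natural map $\sideset{}{'}\prod_{\omega} \Sigma \omega \alpha(\underline{X}) \to \prod_{\omega} \Sigma \omega \alpha(\underline{X})$ is a weak equivalence.

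The main technical obstacle is the bookkeeping of this interchange — the double-limit exchange and the passage from the weak product to the product — together with verifying that, under the decomposition equivalences, the natural comparison map really is coordinate-wise the unit of the Goodwillie tower of the identity. Once these formal checks are granted, the result reduces to the standard convergence of the tower of the identity on simply connected spaces.
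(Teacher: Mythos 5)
Your proposal is correct and follows essentially the same route as the paper: reduce to a cofinal system of multi-indices, apply the decomposition of Theorem~\ref{thm: multivariable calc of fibre}, use Corollary~\ref{cor: weak product is product after Pn} to justify interchanging the inverse limit with the product, and conclude factor-wise from the convergence of the tower of the identity on simply connected spaces (noting that $\kappa \to \infty$). Your explicit connectivity argument comparing the weak product with the product of the source is a point the paper leaves implicit, and is a welcome addition rather than a deviation.
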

\begin{proof}
Let $n \in \bb{N}$ and denote by $\Vec{v}_n$ the multi-index $(n, \dots, n)$. By Lemma~\ref{lem: convergence on diagonal} it suffices to show that the natural map
\[
\Omega(C\Omega\Sigma\underline{X}, \Omega\Sigma\underline{X})^K \longrightarrow \lim_{n \in \bb{N}^\op}~P_{\Vec{v}_n}~(\Omega(C\Omega(-), \Omega(-))^K)(\Sigma\underline{X}),
\]
is an equivalence for component-wise connected $\underline{X}$.
By Theorem~\ref{thm: multivariable calc of fibre}, we have that
\[
\Omega P_{\Vec{v}_n} (C\Omega(-), \Omega(-))^K (\Sigma(\underline{X})) \simeq \Omega \sideset{}{'}\prod_{\omega \in \bb{L}(\mathscr{I}_m)} P_\kappa(\id)(\Sigma \omega \alpha (\underline{X})),
\]
where $\kappa = \min_i(\lfloor \frac{n}{a_i} \rfloor)$ for $a_i$ the smash power of $X_i$ appearing in the word $\omega(\alpha_{I_1,\Vec{k}_1}, \dots ,\alpha_{I_{a(\omega)},\Vec{k}_{a(\omega)}})$ when applied to $\underline{X}$.  
By Corollary~\ref{cor: weak product is product after Pn}, only finitely many terms of the weak product on the right are non-zero, hence there is no need to distinguish between the actual product and weak product so we have that 
\[
\Omega P_{\Vec{v}_n} (C\Omega(-), \Omega(-))^K (\Sigma(\underline{X})) \simeq \Omega \prod_{\omega \in \bb{L}(\mathscr{I}_m)} P_\kappa(\id)(\Sigma \omega \alpha (\underline{X})).
\]
Since limits commute we see that 
\[
\Omega \lim_{n \in \bb{N}^\op}P_{\Vec{n}} (C\Omega(-), \Omega(-))^K (\Sigma(\underline{X})) \simeq \Omega \prod_{\omega \in \bb{L}(\mathscr{I}_m)} \lim_{n \in \bb{N}^\op} P_\kappa(\id)(\Sigma \omega \alpha (\underline{X})),
\]
hence it suffices to prove that 
\[
\lim_{n \in \bb{N}^\op} P_\kappa(\id)(\Sigma \omega \alpha (\underline{X})) \simeq \lim_{n \in \bb{N}^\op} P_n(\id)(\Sigma \omega \alpha (\underline{X})),
\]
and invoke the convergence properties of the identity functor on simply connected spaces. This last equivalence is a left cofinality argument analogous to Corollary~\ref{cor: diagonal and single variable convergence}, noting that $\kappa = \min_i\left\lfloor \frac{n}{a_i} \right\rfloor = \left\lfloor \frac{n}{\max_{i}a_i} \right\rfloor$.
\end{proof}

The following corollary proves the first part of Theorem~\ref{Thm: convergence}.

\begin{cor}
Let $K$ be a simplicial complex on $[m]$ such that the fat wedge filtration on the real moment-angle complex on $K$ is trivial. If $\underline{X}$ is component-wise connected, then the Goodwillie tower of the functor 
\[
\Omega((-), \underline{\ast})^K\circ \Sigma : \es{S}_\ast^{\times m} \longrightarrow \T, \ \underline{X} \longmapsto \Omega(\Sigma\underline{X}, \underline{\ast})^K,
\]
converges at $\underline{X}$. \qed
\end{cor}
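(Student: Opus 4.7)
The plan is to assemble this corollary essentially as an immediate consequence of the two preceding lemmas, with a mild connectivity bookkeeping step at the start.

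First I would observe that if each $X_i$ is connected, then each component of $\Sigma \underline{X}$ is simply connected. This brings us into the hypothesis of the previous lemma (the one characterizing convergence of $\Omega((-), \underline{\ast})^K$ via $\Omega(C\Omega(-), \Omega(-))^K$) applied at the object $\Sigma \underline{X}$. By that lemma, the Goodwillie tower of $\Omega((-), \underline{\ast})^K$ converges at $\Sigma \underline{X}$ if and only if the Goodwillie tower of $\Omega(C\Omega(-), \Omega(-))^K$ converges at $\Sigma \underline{X}$.

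Next, I would observe that the convergence of the Goodwillie tower of $\Omega(C\Omega(-), \Omega(-))^K$ at $\Sigma\underline{X}$ is, by definition, the convergence of the tower of the composite functor $\Omega(C\Omega(-), \Omega(-))^K \circ \Sigma$ at $\underline{X}$, since the universal $\vec{n}$-excisive approximation of a composite $F \circ \Sigma$ evaluated at $\underline{X}$ is $P_{\vec{n}}(F)(\Sigma \underline{X})$ (as $\Sigma$ is colimit-preserving in each variable). This is precisely the functor whose convergence at component-wise connected $\underline{X}$ was established in the immediately preceding lemma.

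Putting these two steps together yields that $\Omega((-), \underline{\ast})^K$ has convergent Goodwillie tower at $\Sigma\underline{X}$, which is the same as saying that $\Omega((-), \underline{\ast})^K \circ \Sigma$ has convergent tower at $\underline{X}$. There is no real obstacle here: the content of the proof is entirely upstream, in the split fibre decomposition (Corollary \ref{cor: cone fibre seq splits}), the Iriye–Kishimoto/Hilton–Milnor decomposition of the fibre factor (Proposition \ref{prop: integral decomposition of the fibre}) and the standard convergence of the Goodwillie tower of the identity on simply connected spaces. One thing to be a little careful about is that the previous lemma requires component-wise \emph{simple connectivity} whereas our hypothesis only gives component-wise connectivity on $\underline{X}$ itself; this is exactly why we suspend before applying the earlier results, and why the corollary is stated for $\Omega((-), \underline{\ast})^K \circ \Sigma$ rather than for $\Omega((-), \underline{\ast})^K$ directly.
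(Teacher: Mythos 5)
Your proposal is correct and is exactly the argument the paper intends: the corollary is stated with a \qed because it follows immediately from the two preceding lemmas by evaluating the first at the component-wise simply connected tuple $\Sigma\underline{X}$ and feeding in the second. Your extra remark that $P_{\vec{n}}(F\circ\Sigma)\simeq P_{\vec{n}}(F)\circ\Sigma$ (since $\Sigma$ preserves strongly cocartesian cubes) is the right justification for the identification the paper leaves implicit.
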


\begin{rem}
We would like to be able to conclude that the functor $((-), \underline{\ast})^K$ has a convergent Goodwillie tower. For this we would need an appropriate notion of \emph{analytic multi-variable} functors. Note that if $F: \T \to \T$ is analytic, see e.g.,~\cite[Definition 4.2]{GoodCalcII}, then the Goodwillie tower of $F$ converges since we can explicitly compute the connectivity of the maps $F(X) \to P_nF(X)$ as a linear function of $n$. In particular, if one knew the connectivity of the maps $\Omega F(X) \to \Omega P_nF(X) \simeq P_n\Omega F(X)$, then one could conclude the connectivity of $F(X) \to P_nF(X)$ which would still be linear in $n$ as looping introduces a constant. Unfortunately, checkable conditions for the convergence of multi-variable functors are a current gap in the literature and seem to be incredibly subtle.
\end{rem}

\subsection{Convergence after chromatic localization}
For details on unstable chromatic homotopy theory see \cite{HeutsHandbook}. Fix a prime $p$, and a height $h \geq 1$, and work $p$-locally throughout this section. Let $V$ denote a finite type $h$ complex with $v_h$ self-map $v: \Sigma^dV \to V$ for some natural number $d>0$. For a pointed space $X$, the $v$-periodic homotopy groups of a pointed space $X$ with coefficients in $V$, is
\[
v^{-1}\pi_\ast(X;V) = v^{-1}\pi_\ast(\Map_\ast(V,X)),
\]
given by inverting the action of $v$ on the homotopy groups of $\Map_\ast(V,X)$. The asymptotic uniqueness of $v_h$ self-maps implies that the $v$-periodic homotopy groups depend only on the height. Let $T(h)$ denote the mapping telescope of a $v_h$ self-map on a finite type $h$ spectrum. The Bousfield-Kuhn functor is a functor
\[
\Phi_h : \T \longrightarrow \s_{T(h)},
\]
with the property that the $v_h$-periodic homotopy groups of a space $X$ may be recovered from the homotopy groups of the Bousfield-Kuhn functor. A map $f: X \to Y$ is an equivalence in $v_h$-periodic homotopy if and only if it is sent to an equivalence of spectra by the Bousfield Kuhn functor. Note that a map $E \to F$ on $T(h)$-local spectra is an equivalence if and only if $E^V \to F^V$ is an equivalence for any finite type $h$ complex $V$ with a $v_h$ self-map $v : \Sigma^dV \to V$, hence $T(h)$ may be described as the mapping telescope of $v$, and hence
\[
\pi_\ast(\Phi_h(X)^V) \cong v^{-1}\pi_\ast(X;V). 
\]
We show that the Goodwillie tower of the polyhedral product functor $(-, \underline{\ast})^K$ diverges in $v_h$-periodic homotopy under our continuing mild hypotheses on $K$. Our arguments here are heavily based on arguments of Brantner and Heuts~\cite[\S3]{BrantnerHeuts} in the case of a wedge, i.e., when $K$ is a finite collection of points and where our Ganea style decomposition is replaced by the Hilton-Milnor splitting. 

By~\cite[Theorem 1.1]{Kuhn}, there is an equivalence,
\[
\Phi_h(X)^V \simeq \Phi_v(X),
\]
where $\Phi_v(X)$ is the spectrum defined by 
\[
\Phi_v(X)_{kd} = \Map_\ast(V,X),
\]
with structure maps given by precomposition with the $v_h$ self-map $v: \Sigma^dV \to V$. We say that the Goodwillie calculus of $F: \es{S}^{\times m}_\ast \to T$ \emph{diverges in $v_h$-periodic homotopy at $\underline{X}$} if the map
\[
\Phi_vF(\underline{X}) \longrightarrow \lim_{n \in \bb{N}^\op} \Phi_v~ P_nF(\underline{X}),
\]
is not an equivalence. Note that by Corollary~\ref{cor: single iff multi convergence} for convergence questions there is no difference between single-variable and multi-variable calculus. 

\begin{lem}\label{lem: vh divergence of multi-calc}
Let $K$ be a simplicial complex on $[m]$ such that the fat wedge filtration on the real moment-angle complex on $K$ is trivial. If $\underline{X}$ is component-wise connected and $\Phi_v(\Sigma \omega \alpha (\underline{X}))$ is not contractible for infinitely many $\omega \in \bb{L}(\mathscr{I}_m)$, then the canonical map 
\[
\Phi_v(\Omega(C\Omega\Sigma\underline{X}, \Omega\Sigma\underline{X})^K) \longrightarrow  \lim_{n \in \bb{N}^\op} \Phi_v P_{\vec{v}_n}(\Omega(C\Omega(-), \Omega(-))^K)(\Sigma\underline{X}),
\]
is not an equivalence.
\end{lem}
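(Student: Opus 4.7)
The plan is to follow the divergence strategy of Brantner and Heuts~\cite[\S3]{BrantnerHeuts}, replacing their Hilton-Milnor decomposition of a wedge with the decompositions we have available: Proposition~\ref{prop: integral decomposition of the fibre} for the source and Theorem~\ref{thm: multivariable calc of fibre} for the target. After applying $\Phi_v$ to both sides of the comparison map, the argument will reduce to the canonical inclusion of a wedge of spectra into the corresponding product of spectra, which fails to be an equivalence by a support argument on $v_h$-periodic homotopy groups.

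To carry this out, recall that $\Phi_v$ preserves finite limits (hence $\Omega$), finite products of pointed spaces (via $\Map_\ast(V,-)$), and filtered colimits of pointed spaces (by compactness of $V$). Since finite products and finite coproducts of spectra coincide, $\Phi_v$ converts a weak product of pointed spaces into a wedge of spectra. Applied to Proposition~\ref{prop: integral decomposition of the fibre} this gives
\[
\Phi_v\bigl(\Omega(C\Omega\Sigma\underline{X},\Omega\Sigma\underline{X})^K\bigr) \simeq \Omega\bigvee_{\omega \in \bb{L}(\mathscr{I}_m)} \Phi_v(\Sigma\omega\alpha(\underline{X})).
\]
For the target, Theorem~\ref{thm: multivariable calc of fibre} combined with Corollary~\ref{cor: weak product is product after Pn} exhibits $P_{\Vec{v}_n}$ of the functor as a finite product for each $n$, which $\Phi_v$ preserves; passing to the limit over $n$ (which $\Phi_v$ likewise preserves) and running a cofinality argument in the Goodwillie index $\kappa(n,\omega)$ yields
\[
\lim_{n \in \bb{N}^\op} \Phi_v P_{\Vec{v}_n}\bigl(\Omega(C\Omega(-),\Omega(-))^K\bigr)(\Sigma\underline{X}) \simeq \Omega\prod_{\omega \in \bb{L}(\mathscr{I}_m)} \lim_{k \in \bb{N}^\op} \Phi_v P_k(\id)(\Sigma\omega\alpha(\underline{X})).
\]
Each $\Sigma\omega\alpha(\underline{X})$ is simply connected, being a suspension of a connected space, so the integral Goodwillie tower of the identity converges on it; since $\Phi_v$ preserves limits, each factor on the right simplifies to $\Phi_v(\Sigma\omega\alpha(\underline{X}))$.

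The map of the lemma is therefore identified with $\Omega$ applied to the canonical inclusion
\[
\bigvee_{\omega} \Phi_v(\Sigma\omega\alpha(\underline{X})) \longrightarrow \prod_{\omega} \Phi_v(\Sigma\omega\alpha(\underline{X}))
\]
of spectra. By the $d$-periodicity of $\pi_\ast\Phi_v$, the hypothesis furnishes a single degree $*$ in which infinitely many of the groups $\pi_\ast\Phi_v(\Sigma\omega\alpha(\underline{X}))$ are non-zero, and on $\pi_\ast$ our map becomes the standard inclusion $\bigoplus_\omega G_\omega \hookrightarrow \prod_\omega G_\omega$ of abelian groups. This is an isomorphism only when all but finitely many $G_\omega$ vanish, giving the desired contradiction. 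The main technical point is keeping careful track of where $\Phi_v$ commutes with (weak) products and with the inverse limit defining the Goodwillie tower; both follow from compactness of $V$ and the standard pointwise description of the Bousfield-Kuhn construction.
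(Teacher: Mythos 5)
Your overall strategy matches the paper's (which defers to Brantner--Heuts \cite[Corollary 3.3]{BrantnerHeuts}): identify the source with a weak product (equivalently, on spectra, a wedge) of the $\Phi_v(\Sigma\omega\alpha(\underline{X}))$ via Proposition~\ref{prop: integral decomposition of the fibre}, identify the target factorwise via Theorem~\ref{thm: multivariable calc of fibre} and a cofinality argument in $\kappa$, and then run a direct-sum-versus-product argument on homotopy groups using $d$-periodicity and the pigeonhole principle. However, there is one genuinely false step: you claim that ``$\Phi_v$ preserves limits'' and use this to replace each target factor $\lim_{k}\Phi_v P_k(\id)(\Sigma\omega\alpha(\underline{X}))$ by $\Phi_v(\Sigma\omega\alpha(\underline{X}))$. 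The functor $\Phi_v$ is built from $\Map_\ast(V,-)$ (which does preserve limits) followed by a mapping telescope, i.e.\ a sequential \emph{colimit} inverting $v$; compactness of $V$ buys you commutation with filtered colimits, not with the inverse limit of the Goodwillie tower. The assertion $\lim_k\Phi_v P_k(\id)(Y)\simeq \Phi_v(Y)$ for simply connected $Y$ is precisely the statement that $Y$ has convergent $v_h$-periodic Goodwillie tower (that $Y$ is ``$\Phi$-good''), which is exactly the kind of claim whose failure this whole section is establishing: if integral convergence implied $v_h$-periodic convergence in this way, Theorem~\ref{Thm: convergence} and the Brantner--Heuts divergence theorem would be false.

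The good news is that the step is unnecessary. Leave the target factors as $B_\omega=\pi_{\ast}\lim_k\Phi_v P_k(\id)(\Sigma\omega\alpha(\underline{X}))$ and do not attempt to identify them with $A_\omega=\pi_\ast\Phi_v(\Sigma\omega\alpha(\underline{X}))$. The comparison map is a product of factorwise maps $f_\omega\colon A_\omega\to B_\omega$ from a direct sum into a product. If it were an isomorphism, injectivity would force each $f_\omega$ to be injective, and then surjectivity would fail in any degree where infinitely many $A_\omega$ are nonzero: the element $(f_\omega(a_\omega))_\omega$ with $a_\omega\neq 0$ chosen for infinitely many $\omega$ lies in the product but not in the image of the direct sum. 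Periodicity and pigeonhole supply such a degree from your hypothesis, exactly as you say. This is the ``diagram chase'' the paper alludes to, and it completes your argument without any commutation of $\Phi_v$ past inverse limits.
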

\begin{proof}
The argument is all but identical to~\cite[Corollary 3.3]{BrantnerHeuts}, and replies to two key facts: firstly, that the weak product can be replaced by the actual product, and secondly, that the map
\[
\Phi_v(\Omega(C\Omega\Sigma\underline{X}, \Omega\Sigma\underline{X})^K) \longrightarrow \lim_{n \in \bb{N}^\op} \Phi_v P_{\vec{v}_n}(\Omega(C\Omega(-), \Omega(-))^K)(\Sigma\underline{X}),
\]
may be identified up to equivalence with the evident map
\[
 \sideset{}{'}\prod_{\omega \in \bb{L}(\mathscr{I}_m)} \Phi_v\Sigma \omega \alpha (\underline{X}) \longrightarrow  \prod_{\omega \in \bb{L}(\mathscr{I}_m)} \lim_{n} \Phi_vP_n(\id)(\Sigma \omega \alpha  (\underline{X})).
\]
The proof is then a diagram chase on homotopy groups using the periodicity of the homotopy groups of $\Phi_v$ and the pigeonhole principle.
\end{proof}

The following theorem completes the proof of Theorem~\ref{Thm: convergence}.

\begin{thm}
Let $K \neq \Delta^m$ be a simplicial complex on $[m]$ such that the fat wedge filtration on the real moment-angle complex on $K$ is trivial. The $v_h$-periodic Goodwillie calculus fails to converge on spheres of dimension at least two.
\end{thm}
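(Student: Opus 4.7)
The plan is to reduce the statement to Lemma~\ref{lem: vh divergence of multi-calc} and then to exhibit infinitely many Hall words $\omega \in \bb{L}(\mathscr{I}_m)$ for which $\Phi_v(\Sigma \omega \alpha(\underline{Y}))$ is not contractible, where $\underline{Y}$ is a suitable tuple of spheres. The argument parallels that of Brantner and Heuts~\cite[\S3]{BrantnerHeuts} for wedges, with the Hilton-Milnor splitting there replaced by the decomposition assembled in Section~\ref{sec: integral decomps}.

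First I would write $\underline{X} = \Sigma\underline{Y}$ with $Y_i = S^{n_i-1}$ and $n_i \geq 2$, so that $\underline{X}$ is a tuple of spheres of dimension at least two and $\underline{Y}$ is component-wise connected. By Corollary~\ref{cor: single iff multi convergence} we may pass to the multi-variable tower, and the splitting of Corollary~\ref{cor: cone fibre seq splits} (lifted to calculus by Lemma~\ref{lem: decomposition of multi-variable excisive}) decomposes the multi-variable Goodwillie tower of $\Omega((-), \underline{\ast})^K$ at $\Sigma\underline{Y}$ as a product of the tower of $\Omega(C\Omega(-), \Omega(-))^K$ with that of $\prod_{i=1}^m(-)$. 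By Lemma~\ref{lem: multi-variable approx to product} the product factor reduces to the identity tower in each coordinate, which converges on the simply connected $\Sigma\underline{Y}$. Since $\Phi_v$ commutes with a single loop up to a shift of spectra, $v_h$-periodic (non)convergence is preserved by $\Omega$, so it suffices to establish $v_h$-periodic divergence of $\Omega(C\Omega(-), \Omega(-))^K$ at $\Sigma\underline{Y}$, which by Lemma~\ref{lem: vh divergence of multi-calc} is reduced to the combinatorial problem of producing the desired infinite family of $\omega$.

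For the infinite family, I would assume without loss of generality that $K$ contains every vertex of $[m]$ (otherwise the polyhedral product does not see the missing factors) and choose a minimal non-face $I_0$ of $K$. Since $K$ is not the full simplex we have $|I_0| \geq 2$, and minimality forces $K_{I_0} = \partial \Delta^{|I_0|-1}$, so $|K_{I_0}| \simeq S^{|I_0|-2}$. The infinite set $\{(I_0, \vec{k}) : \vec{k} \in \mathbb{Z}^{I_0}_{\geq 1}\}$ is contained in $\mathscr{I}_K \subseteq \mathscr{I}_m$, so the free Lie algebra on just these letters already contributes infinitely many distinct Hall basis words $\omega$. For each such $\omega$, the space $\omega\alpha(\underline{Y})$ is a smash product of copies of $S^{|I_0|-2}$ with spheres $Y_i^{\wedge k_i}$, hence itself a sphere; a dimension count using $|I_0| \geq 2$ and $\dim Y_i \geq 1$ shows that $\Sigma\omega\alpha(\underline{Y})$ is a sphere of dimension at least two.

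The last input is the standard non-vanishing of $v_h$-periodic homotopy of spheres of dimension at least two, namely $\Phi_v(S^N) \not\simeq \ast$ for $N \geq 2$, which is exactly the chromatic fact used by Brantner and Heuts. Applying it to each sphere $\Sigma\omega\alpha(\underline{Y})$ verifies the hypothesis of Lemma~\ref{lem: vh divergence of multi-calc} and completes the argument. I expect the substantive step to be the geometric one: choosing a minimal non-face so that $|K_{I_0}|$ is itself a sphere ensures that \emph{every} Hall word in the corresponding letters contributes a non-contractible $\Phi_v$-value, and the remainder is formal bookkeeping following the Brantner-Heuts template.
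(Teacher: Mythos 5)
Your proposal is correct and follows essentially the same route as the paper: reduce via Corollary~\ref{cor: single iff multi convergence} and the splitting to Lemma~\ref{lem: vh divergence of multi-calc}, then produce infinitely many Hall words from a minimal non-face $I_0$, whose realization $|K_{I_0}|$ is a sphere, so that every $\Sigma\omega\alpha(\underline{Y})$ is a sphere of dimension at least two with non-contractible $\Phi_v$. Your extra bookkeeping (identifying $|K_{I_0}|$ as $S^{|I_0|-2}$ and spelling out why divergence of the fibre factor suffices) only makes explicit what the paper leaves implicit.
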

\begin{proof}
By Corollary~\ref{cor: single iff multi convergence} it suffices to show that the multi-variable calculus does not converge on spheres of dimension at least two. Let $\underline{X}$ be a sequence of spheres of dimension at least one (so that their suspensions have dimension at least two). By Lemma~\ref{lem: vh divergence of multi-calc}, we must establish that there are infinitely many $\omega \in \bb{L}(\mathscr{I}_m)$ for which $\Phi_v(\Sigma \omega \alpha (\underline{X}))$ is not contractible.

Since $K$ is not the full simplex, it has at least one minimal missing face, $I_0$, and for such a face we have $K_{I_0} = \partial \Delta^{I_0}$, so $|K_{I_0}|$ is an $(|I_0|-1)$-sphere. Since $\underline{X}$ consists of spheres, we have that each $\alpha_{I_0,\vec{k}}(\underline{X})$ is a sphere of dimension at least two, and since $|I_0| \geq 2$ there are countably many such $\vec{k}$. The Lie basis $\bb{L}(\mathscr{I}_m)$ contains all of the lie words $\omega$ on these countably many generators, and each of these words yields a $\Sigma \omega \alpha (\underline{X})$ which is a sphere of dimension at least two, hence a $\Phi_v(\Sigma \omega \alpha (\underline{X}))$ which is not contractible, as required.
\end{proof}

\bibliographystyle{alpha}
\bibliography{references} 
\end{document}